\documentclass[12pt]{amsart}
\usepackage{amsmath}
\usepackage{amssymb}
\usepackage{amstext}
\usepackage{amscd}
\usepackage[matrix,arrow,ps]{xy}

\newtheorem{teo}{Theorem}[section]
\newtheorem{prop}[teo]{Proposition}
\newtheorem{lem}[teo]{Lemma}

\newtheorem{exe}[teo]{Example}
\newtheorem{defini}[teo]{Definition}

\newcommand{\SL}{{\rm SL}}

\newcommand{\Res}{{\rm Res}}
\newcommand{\MT}{{\rm MT}}

\newcommand{\CC}{{\mathbb C}}
\newcommand{\RR}{{\mathbb R}}
\newcommand{\ZZ}{{\mathbb Z}}
\newcommand{\QQ}{{\mathbb Q}}

\newcommand{\HH}{{\mathbb H}}

\newcommand{\GG}{{\mathbb G}}
\newcommand{\SSS}{{\mathbb S}}
\newcommand{\AAA}{{\mathbb A}}

\newcommand{\lto}{\longrightarrow}

\newcommand{\cH}{{\mathcal H}}
\newcommand{\cK}{{\mathcal K}}

\newcommand{\cX}{{\mathcal X}}

\newcommand{\ol}{\overline}

\newcommand{\wt}{\widetilde}

\title{Algebraic flows on Shimura varieties.}
\author{Emmanuel Ullmo, Andrei Yafaev}
\begin{document}

\maketitle

\tableofcontents

\section{Introduction.}
In the paper \cite{UY_AV} we formulated certain conjectures about algebraic
flows on abelian varieties and proved certain cases of these conjectures.
The purpose of this paper is two-fold. We first prove the `logarithmic Ax-Lindemann theorem' 
(see details below). We then prove a result analogous to one of the main results of \cite{UY_AV}
in the hyperbolic (Shimura) case about the topological closure of the images of 
totally geodesic subvarieties of the symmetric spaces uniformising Shimura varieties.

Let $(G,X)$ be a  Shimura datum and $X^+$ be a connected component of $X$.
Recall from \cite{Ul}, section 2.1 that a realisation $\cX$ of $X^+$  is a complex quasi-projective variety $\cX$ with a transitive holomorphic action of $G(\RR)^+$
such that for any $x_0 \in \cX$, the orbit map  $\psi_{x_0} \colon G(\RR)^+ \lto \cX$ mapping $g$ to $g x_0$ is
semi-algebraic.
%In that case (see Lemma 2.1 of \cite{Ul}), $\cX$ is semialgebraic.
 There is a natural notion of a morphism of realisations.
By \cite{Ul}, lemma 2.1, any realisation of $X^+$ has a canonical
semi-algebraic structure and any morphism of realisations is semi-algebraic. 

In what follows we fix a realisation $\cX$ of $X^+$ and by a slight abuse of language still call this 
realisation $X^+$. 
It is an immediate consequence of Lemma 2.1 of \cite{Ul} that all the conjectures and statements that follow are
independent of the chosen realisation.

In view of the lemma B1 of \cite{KUY}, we may define an algebraic subset $Y$
of $X^+$ to be a closed analytic, semi-algebraic subset of $X^+$.
Given an irreducible analytic subset $\Theta \subset X^+$, we define the Zariski closure of 
$\Theta$ to be the analytic component containing $\Theta$ of the smallest algebraic subset of $X^+$
containing $\Theta$.

We can now state some results and conjectures.

The classical formulation of the hyperbolic Ax-Lindemann theorem is as follows:

\begin{teo}[Hyperbolic Ax-Lindemann theorem]
Let $S$ be a Shimura variety and $\pi \colon X^+ \lto S$
be the uniformisation map.
Let $Z$ be an algebraic subvariety of $S$ and $Y$ a 
maximal algebraic subvariety of $\pi^{-1}(Z)$.
Then $\pi(Y)$ is a weakly special subvariety of $S$.
\end{teo}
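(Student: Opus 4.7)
The plan is to follow the o-minimality and counting strategy of Pila--Ullmo--Yafaev--Klingler: produce a positive-dimensional real algebraic subgroup of $G(\RR)^+$ that stabilises $Y$, and then deduce from the structure of this stabiliser that $\pi(Y)$ is weakly special. Throughout, write $\Theta(Y)$ for the connected component of the identity of the real algebraic group $\{g \in G(\RR)^+ : g \cdot Y = Y\}$; the goal is to show that $Y$ is a single $\Theta(Y)$-orbit and that the semisimple part of $\Theta(Y)$ of non-compact type already acts transitively on $Y$, which gives the conclusion by definition of a weakly special subvariety.

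First I would fix a semi-algebraic fundamental set $\cF$ for the action of the arithmetic lattice $\Gamma \subset G(\QQ)^+$ on $X^+$ such that $\pi|_{\cF}$ is definable in the o-minimal structure $\RR_{\mathrm{an},\mathrm{exp}}$, by the now-standard construction used in \cite{KUY}. The key definable set is then a variant of
\[
\Sigma = \{g \in G(\RR)^+ : g \cdot Y \cap \cF \subset \pi^{-1}(Z) \cap \cF\},
\]
which, after restricting the action to a suitable compact piece of $Y$, is definable in $\RR_{\mathrm{an},\mathrm{exp}}$.

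The next step is to manufacture many integral points of $\Sigma$ of controlled height. For every $\gamma \in \Gamma$ the translate $\gamma Y$ is algebraic and lies in $\pi^{-1}(Z)$, so $\gamma \in \Sigma$; the monodromy / Hecke-orbit lower bounds established in \cite{KUY} then provide at least $\gg T^{c}$ such $\gamma$ of height $\le T$ for some $c>0$. Applying the Pila--Wilkie theorem to $\Sigma$ forces the existence of a connected semi-algebraic arc $\{g(t)\}_{t \in I} \subset \Sigma$ through the identity. Maximality of $Y$ inside $\pi^{-1}(Z)$, applied to the semi-algebraic set $\bigcup_t g(t) \cdot Y \subset \pi^{-1}(Z)$, then upgrades the inclusion $g(t) \cdot Y \subset \pi^{-1}(Z)$ to the equality $g(t) \cdot Y = Y$, so $\dim \Theta(Y) > 0$.

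To upgrade a positive-dimensional $\Theta(Y)$ to a semisimple subgroup of non-compact type whose real points act transitively on $Y$, I would iterate the counting argument on a transversal to the $\Theta(Y)$-orbits and combine it with the Levi--Mostow structure theory of real algebraic subgroups of $G$, using that compact factors and unipotent radicals cannot produce the transcendence needed to fill out $Y$. The main obstacle is precisely this final structural step, together with the delicate bookkeeping that makes Pila--Wilkie applicable to $\Sigma$: the arithmetic input on Hecke / monodromy orbits must be strong enough to trigger the counting theorem, and extracting the correct semisimple non-compact subgroup from an arbitrary positive-dimensional algebraic stabiliser genuinely uses the geometry of the Shimura datum $(G,X)$ rather than just the ambient semi-algebraic picture.
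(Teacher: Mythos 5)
The paper does not actually prove this theorem: immediately after stating it, it says ``The hyperbolic Ax-Lindemann conjecture has been proven in full generality in \cite{KUY}'' and uses it as a black box (the only thing proved in the paper around this theorem is Proposition \ref{equivalence}, the equivalence of the two formulations). So there is no ``paper's own proof'' to compare against; what you have written is a high-level recapitulation of the Klingler--Ullmo--Yafaev strategy, which is indeed the right reference.

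As a sketch of that strategy it is broadly correct but has a few points that would need repair before it could be called a proof outline. First, the definable set $\Sigma = \{g : g\cdot Y \cap \cF \subset \pi^{-1}(Z)\cap \cF\}$ is not the right object: the containment is vacuously satisfied whenever $g\cdot Y \cap \cF = \emptyset$, and for $\gamma \in \Gamma$ the inclusion $\gamma Y \subset \pi^{-1}(Z)$ is automatic, so this $\Sigma$ is far too large and carries no information. The set used in \cite{KUY} is of the form $\{g : \dim(gY\cap\cF\cap\pi^{-1}(Z)) = \dim Y\}$ (or a germ version pinned to a fixed point of $Y$), precisely so that finiteness of the fibres of $\pi|_{\cF}$ makes the Pila--Wilkie bound bite. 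Second, the lower bound ``$\gg T^c$ lattice points of height $\le T$'' is not a citation one can wave at; it is one of the two genuinely hard inputs in \cite{KUY}, relying on Hwang--To volume estimates and the geometry of Siegel sets, and your phrasing suggests it is more routine than it is. Third, the final step — passing from $\dim\Theta(Y)>0$ to ``$\Theta(Y)$ is semisimple without compact factors and acts transitively on $Y$, hence $Y$ is weakly special'' — is the Ax-type ``stabilizer'' argument and is another substantive piece of \cite{KUY} (and of \cite{UY1} in the cocompact case); you correctly flag it as the main obstacle but do not indicate the mechanism (roughly: maximality forces $Y$ to be an orbit of the identity component of its algebraic stabilizer, and Deligne's axioms plus the complex structure on $Y$ force that stabilizer, after dividing out compact and unipotent pieces, to give a sub-Hermitian-symmetric domain). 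None of this is wrong in spirit, but the theorem is not reproved in this paper and your sketch, as written, would not carry the Pila--Wilkie step through.
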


We will see (see proposition \ref{equivalence}) that this is equivalent to:

\begin{teo}[Hyperbolic Ax-Lindemann theorem, version 2.]
Let $Z$ be any irreducible algebraic subvariety of $X^+$ then the Zariski closure of $\pi(Z)$
is weakly special.
\end{teo}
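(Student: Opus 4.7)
The plan is to deploy the Pila-Zannier strategy in the form used for the hyperbolic Ax-Lindemann theorem. First I would reduce to the setup of Theorem~1: set $W = \ol{\pi(Z)}^{\mathrm{Zar}}\subseteq S$ and choose an irreducible algebraic subvariety $Y\subseteq\pi^{-1}(W)$ containing $Z$ and maximal for these two properties (existence: the family of irreducible algebraic subvarieties of $X^+$ contained in a fixed analytic set is bounded in dimension by $\dim X^+$). Since $\pi(Z)\subseteq\pi(Y)\subseteq W$ and $W$ is by definition the Zariski closure of $\pi(Z)$, one has $\ol{\pi(Y)}^{\mathrm{Zar}}=W$, and because weakly special subvarieties of $S$ are closed and algebraic it will suffice to show that $\pi(Y)$ is weakly special.

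Next I would set up the definable counting problem. Fix a semi-algebraic fundamental set $\cF\subseteq X^+$ for the arithmetic lattice $\Gamma\subseteq G(\QQ)^+$ on which $\pi|_{\cF}$ is definable in the o-minimal structure $\RR_{\mathrm{an,exp}}$. The set
$$\Sigma \;=\; \bigl\{\,g\in G(\RR)^+ \,:\, g\cdot Y\cap\cF\subseteq\pi^{-1}(W)\cap\cF \ \text{and}\ \dim(g\cdot Y) = \dim Y\,\bigr\}$$
is then definable and contains $\Gamma$, because $\pi(\gamma Y) = \pi(Y)\subseteq W$ for every $\gamma\in\Gamma$.

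The heart of the proof is a Pila-Wilkie counting argument. Arithmetic volume bounds give that the number of $\gamma\in\Gamma$ of height at most $T$ grows polynomially in $T$ of positive degree, so $\Sigma$ contains at least $cT^\alpha$ rational points of height at most $T$. Pila-Wilkie then forces $\Sigma$ to contain a non-trivial connected semi-algebraic arc. Translating this arc so that it passes through the identity and applying a Nori-type lemma as in \cite{UY_AV} produces a positive-dimensional connected real algebraic subgroup $H\subseteq G(\RR)^+$ with $H\cdot Y\subseteq\pi^{-1}(W)$; a Mumford-Tate argument then shows that the maximal such $H$ is defined over $\QQ$.

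To conclude, the orbit $H\cdot Y$ is an irreducible algebraic subvariety of $X^+$ containing $Y$ and contained in $\pi^{-1}(W)$, so the maximality of $Y$ forces $H\cdot Y = Y$; hence $Y$ is stabilised by a $\QQ$-algebraic subgroup of $G$, which means that $Y$ is a connected component of the preimage of a weakly special subvariety. Thus $\pi(Y)$, and therefore $W$, is weakly special. The principal obstacle will be the descent step: the raw Pila-Wilkie output produces only a real algebraic subgroup, and without descending it to a $\QQ$-subgroup of $G$ one cannot conclude that $H\cdot Y$ projects to a weakly special subvariety of $S$. This descent is precisely where the Shimura structure (the Mumford-Tate formalism on $(G,X)$) enters the proof in an essential way, and it is the step most likely to require genuinely new input beyond the soft semi-algebraic machinery.
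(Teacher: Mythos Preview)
Your opening paragraph is precisely the paper's argument. Proposition~\ref{equivalence} derives version~2 from version~1 by exactly this reduction: set $W=\ol{\pi(Z)}^{Zar}$, choose $Y\subseteq\pi^{-1}(W)$ maximal irreducible algebraic containing $Z$, and observe that $\pi(Z)\subseteq\pi(Y)\subseteq W$ forces $W=\pi(Y)$ once version~1 guarantees that $\pi(Y)$ is weakly special. Since version~1 is already established in \cite{KUY}, the paper stops right there. Everything after your first paragraph is therefore a sketch of the proof of Theorem~1 itself (the content of \cite{KUY}), which this paper treats as a black box rather than reproving.

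That sketch, moreover, contains a genuine gap. As you have written it, the condition $\dim(g\cdot Y)=\dim Y$ is vacuous for $g\in G(\RR)^+$, and the inclusion $g\cdot Y\cap\cF\subseteq\pi^{-1}(W)\cap\cF$ holds trivially whenever $g\cdot Y$ misses $\cF$. Hence your $\Sigma$ contains all of $\Gamma$ together with every $g$ translating $Y$ off the fundamental set; it is far too large, and the mere fact that it has many rational points gives Pila--Wilkie nothing to bite on. The set one actually needs (as in \cite{KUY}) imposes $\dim(g\cdot Y\cap\cF)=\dim Y$, and then $\Sigma\cap\Gamma$ consists only of those $\gamma$ for which a $\Gamma$-translate of $\cF$ meets $Y$ in full dimension. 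Producing polynomially many such $\gamma$ of height at most $T$ is not a soft observation: it is the hyperbolic height/volume comparison that constitutes the geometric core of \cite{KUY}, and your sketch suppresses it entirely. The descent step you flag as the principal obstacle is indeed delicate, but the counting input you treat as automatic is equally substantive.
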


The hyperbolic Ax-Lindemann conjecture has been proven in full generality in \cite{KUY}.

In the second section we define a notion of a weakly special subvariety of $X^+$.
This is  a complex analytic subset $\Theta$ of $X^+$ such that there exists a 
semi-simple algebraic subgroup $F$ of $G(\RR)^+$  and a point $x \in X^+$ 
satisfying certain conditions such that
$\Theta = F \cdot x$.

In Section 3 of this paper
we prove a `logarithmic'  Ax-Lindemann theorem
(a question asked by D. Bertrand). 

\begin{teo}[Logarithmic Ax-Lindemann]
Let $\pi \colon X^+ \lto S$ be the uniformisation map.
Let $Y$ be an algebraic subvariety of $S$ and let $Y'$ be
an analytic component of $\pi^{-1}(Y)$. The Zariski closure of
$Y'$ is a weakly special subvariety.
\end{teo}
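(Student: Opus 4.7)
The plan is to reduce to the classical hyperbolic Ax--Lindemann theorem applied to the Zariski closure of $Y'$ in $X^+$, and then conclude by matching dimensions. Without loss of generality we may assume $Y \subseteq S$ is irreducible (otherwise argue component by component, noting that each analytic component $Y'$ of $\pi^{-1}(Y)$ lies over a single irreducible component of $Y$).

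Let $W$ denote the Zariski closure of $Y'$ in $X^+$, i.e.\ the analytic component containing $Y'$ of the smallest algebraic subset of $X^+$ containing $Y'$. By standard semi-algebraic geometry $W$ is itself an irreducible algebraic subvariety of $X^+$ in the sense of the introduction, so the hyperbolic Ax--Lindemann theorem (version 2) applies to it: the Zariski closure $T$ of $\pi(W)$ inside $S$ is a weakly special subvariety of $S$. Let $T' \subseteq X^+$ be the analytic component of $\pi^{-1}(T)$ containing $Y'$. By the discussion of weakly special subvarieties in Section 2, $T'$ is itself weakly special in $X^+$, namely an orbit $F \cdot x$ for a connected semi-simple subgroup $F \subseteq G(\RR)^+$, and is therefore irreducible.

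It remains to check that $W = T'$. Since $\pi(W) \subseteq T$ and $W$ is a connected analytic subset containing $Y'$, we have $W \subseteq T'$. For the reverse inclusion, a dimension count suffices: because $\pi$ has discrete fibres, $\dim W = \dim \pi(W)$; because $\pi(W)$ is by construction Zariski dense in $T$, $\dim \pi(W) = \dim T$; and because $T'$ is an analytic component of $\pi^{-1}(T)$, $\dim T' = \dim T$. Thus $\dim W = \dim T'$, and since $T'$ is irreducible this forces $W = T'$. Hence $W$ is weakly special, as claimed.

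The main obstacle is essentially ``outsourced'': the argument relies decisively on the full strength of the hyperbolic Ax--Lindemann theorem of \cite{KUY}, which packages the deep o-minimal and Pila--Wilkie input. Granted that, the rest is bookkeeping plus the geometric fact, established in Section 2, that an analytic component of $\pi^{-1}(T)$ for $T$ weakly special in $S$ is weakly special in $X^+$. The only minor technical point to verify is that the Zariski closure $W$ is itself an algebraic subvariety of $X^+$ (so that Ax--Lindemann applies to it), which follows from the fact that irreducible components of closed analytic semi-algebraic subsets are again closed analytic and semi-algebraic.
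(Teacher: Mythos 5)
Your proof is essentially correct, but it takes a genuinely different route from the paper's, and the difference is worth noting. You reduce the logarithmic statement to the full hyperbolic Ax--Lindemann theorem of \cite{KUY}: you take $W$ to be the Zariski closure of $Y'$ in $X^+$, apply Ax--Lindemann to $W$ to get a weakly special $T = \overline{\pi(W)}^{Zar}$ in $S$, pass to the analytic component $T'$ of $\pi^{-1}(T)$ containing $Y'$, and then match dimensions to force $W = T'$. The dimension count is fine (all three equalities $\dim W = \dim \pi(W) = \dim T = \dim T'$ hold, $W$ is closed in $T'$, both are irreducible), and the identification of $T'$ with an orbit $F(\RR)^+\cdot x$ is indeed supplied by Section~2.

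The paper, however, does \emph{not} invoke the hyperbolic Ax--Lindemann theorem at all. Its argument is much softer: it first uses the monodromy theorem (Theorem \ref{monodromy}, resting on Deligne--Andr\'e and Moonen's factorization) to reduce to the case where the monodromy group $\Gamma_Y$ of $Y$ is Zariski dense in $G$; it then observes that $\Gamma_Y$ stabilises $Y'$, hence stabilises ${Y'}^{Zar}$; and finally it notes that the full stabiliser $G_Y$ of ${Y'}^{Zar}$ in $G(\RR)^+$ is a closed real algebraic subgroup (because ${Y'}^{Zar}$ is semi-algebraic and analytically closed, and the $G(\RR)^+$-action is semi-algebraic), so Zariski density of $\Gamma_Y$ forces $G_Y = G(\RR)^+$ and hence ${Y'}^{Zar} = X^+$. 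So the paper's proof is self-contained and elementary relative to the deep o-minimal input, illustrating that the logarithmic statement is genuinely easier than the full Ax--Lindemann theorem; your proof is shorter on the page but only because it outsources all the difficulty to \cite{KUY}. Both are correct, but the trade-off is opposite: the paper buys independence from Pila--Wilkie at the cost of bringing in the monodromy machinery, while you buy brevity at the cost of making the result logically dependent on a much harder theorem.
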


In \cite{UY_AV}, we formulated two conjectures on algebraic flows on abelian varieties and proved 
partial results towards these conjectures.
An attempt to formulate conjectures of this type in the context of Shimura varieties displays new phenomena that we intend to investigate in 
the future.
We however prove a result which may be seen as a generalisation in the context of Shimura varieties 
of one of the main results of \cite{UY_AV}. To state our result we need to introduce a few notations.

Consider an algebraic subset $\Theta$ of $X^+$. 
In general, instead of (as in the hyperbolic Ax-Lindemann case)  being interested in the Zariski closure
of $\pi(\Theta)$, we look at the usual topological closure $\overline{\pi(\Theta)}$.
We define a notion of \emph{real weakly special} subvariety roughly as the
image of $H(\RR)\cdot x$ where $H$ is a semisimple subgroup of $G$
satisfying certain conditions and $x$ is a point of $X^+$.
Let $K_x$ be the stabiliser of $x$ in $G(\RR)^+$.
In the case where $H(\RR)^+ \cap K_x$ is a maximal compact subgroup,
 a real weakly special subvariety of $S$ is 
a \emph{real} totally geodesic subvariety of $S$.
Notice that in this case the homogeneous space $H(\RR)^+/H(\RR)^+ \cap K_x$ is a 
real symmetric space. 
In the case where $x$ viewed as a morphism from $\SSS$ to $G_{\RR}$
factors through $H_{\RR}$, the corresponding real weakly special 
subvariety has Hermitian structure and in fact is a weakly special subvariety in
the usual sense.
We also note that given a real weakly special subvariety $Z$ of $S$, there is a canonical
probability measure $\mu_Z$ attached to $Z$ which is the pushforward of the Haar
measure on $H(\RR)^+$, suitably normalised to make it a probability measure.

In this paper we prove the following  theorem.

\begin{teo}\label{t1}

Let $\Theta$ be a complex totally geodesic subvariety of $X$. 
Then the components of the topological closure $\overline{\pi(\Theta)}$ are 
real weakly special subvarieties.
\end{teo}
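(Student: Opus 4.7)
The plan is to lift $\Theta$ to an orbit in the ambient Lie group $G(\RR)^+$, apply Ratner's theorem on orbit closures of unipotent flows to find the closure of that orbit, and then upgrade the resulting real Lie subgroup to a $\QQ$-algebraic semisimple subgroup using the arithmeticity of $\Gamma$.

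First, since $\Theta$ is complex totally geodesic in $X^+$, the classification of Hermitian sub-symmetric spaces allows one to write $\Theta = H(\RR)^+ \cdot x$ for a point $x \in X^+$ and an $\RR$-semisimple algebraic subgroup $H \subseteq G_{\RR}$ through which (a conjugate of) the morphism $h_x \colon \SSS \lto G_{\RR}$ factors; crucially, $H$ need not be defined over $\QQ$, which is why $\pi(\Theta)$ is in general not closed in $S$. Under the identification $X^+ \simeq G(\RR)^+/K_x$, the closure $\overline{\pi(\Theta)}$ in $S$ is the image, via the composition $G(\RR)^+ \lto \Gamma \backslash G(\RR)^+ / K_x \simeq S$, of the topological closure of $\Gamma \cdot H(\RR)^+$ in $G(\RR)^+$.

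Next, decompose $H = H_{nc} \cdot H_c$ into the product of non-compact and compact almost-simple factors. Since $H_{nc}(\RR)^+$ is generated by one-parameter unipotent subgroups, Ratner's orbit closure theorem, together with the Mozes--Shah refinements handling the compact part, will yield $\overline{\Gamma \cdot H(\RR)^+} = \Gamma \cdot M(\RR)^+$ for some connected closed Lie subgroup $M$ of $G(\RR)^+$ containing $H$, with $\Gamma \cap M(\RR)^+$ a lattice in $M(\RR)^+$. The existence of this lattice, combined with the arithmeticity of $\Gamma$, should imply that $M$ is the neutral component of the real points of a connected reductive $\QQ$-algebraic subgroup of $G$.

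Finally, to identify $\pi(M(\RR)^+ \cdot x)$ with a real weakly special subvariety in the sense of Section 2, one would observe that $M^{\der}$ is a semisimple $\QQ$-subgroup of $G$ containing $H$; the orbit $M(\RR)^+ \cdot x$ equals $M^{\der}(\RR)^+ \cdot x$ (the centre of $M$ lies essentially in $K_x$, as forced by the lattice condition on $\Gamma \cap M$), and the compatibility conditions imposed on $(M^{\der}, x)$ by the definition are inherited from those on $(H, x)$. The main obstacle will be the transition from the abstract real Lie subgroup provided by Ratner's theorem to a $\QQ$-algebraic semisimple subgroup fitting the definition of real weakly special; in particular, verifying that $M$ is $\QQ$-algebraic, that $M^{\der}$ controls the geometry of $M(\RR)^+ \cdot x$, and that the axioms imposed in Section 2 on $(M^{\der}, x)$ hold, is where the arithmeticity of $\Gamma$ and the rigidity coming from the lattice condition must be used most essentially.
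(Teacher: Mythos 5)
Your overall strategy matches the paper's: lift $\Theta$ to an orbit $F(\RR)^+ x$ in $G(\RR)^+$, apply Ratner's orbit--closure theorem in $\Gamma\backslash G(\RR)^+$, identify the resulting group with a $\QQ$-algebraic subgroup, and push forward through the proper map $\Gamma\backslash G(\RR)^+ \to S$, $g\mapsto gx$. However, there are two genuine gaps in the details. First, you allow $H$ to carry compact factors and invoke the Mozes--Shah machinery to handle them; this is an unnecessary complication because Proposition \ref{factoring} already lets you write $\Theta=F(\RR)^+\cdot x$ with $F$ semisimple \emph{without} compact factors, and such $F$ is generated by one-parameter unipotent subgroups (\cite{PlaRa}, Prop.\ 7.6), so Ratner's Theorem 3 applies directly with no refinement needed. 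Second, and more seriously, the claim that the lattice condition on $\Gamma\cap M$ forces $M$ to be \emph{reductive} is not justified and is not what the paper establishes. What is actually true — and what the paper uses, via Lemme 2.1(c) and Lemme 2.2 (Shah) of \cite{CU} — is that the Lie group given by Ratner is the group of $\RR^+$-points of a $\QQ$-algebraic subgroup, which equals the Mumford--Tate group $\MT(F)$, and that the radical of $\MT(F)$ is \emph{unipotent}, i.e.\ $\MT(F)$ is of type $\cH$. That is exactly the hypothesis appearing in the definition of a real weakly special subvariety, so no detour through $M^{\der}$ or an assertion that ``the centre of $M$ lies essentially in $K_x$'' is required (and the latter is not argued). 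In short: replace the reductivity claim by Shah's unipotent-radical lemma, identify $M$ with $\MT(F)$, and then the definition of real weakly special applies verbatim; the step you flag as the ``main obstacle'' is precisely the content of Lemme 2.1(c) and Lemme 2.2 of \cite{CU}.
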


Recall that a complex totally geodesic subvariety of $X^+$ is
of the form $F \cdot x$ where $F$ is a semisimple real Lie group
subject to certain conditions and $x$ is a point of $X$ such that $F \cap K_x$ is
a maximal compact subgroup of $F$.

%At the end of the last section we give examples when our theorem applies 
%and when we can conclude that $\overline{pi(\Theta)$ is a
%real totally geodesic variety and even a weakly special subvariety.

In certain cases, for example when the centraliser of $F$ in $G(\RR)$ is trivial,
we are able to show that $\overline{\pi(\Theta)}$ is actually a 
(complex) weakly special subvariety.
This condition is satisfied in many cases. For example in the 
case of $\SL_2(\RR)$ diagonally embedded into a product of copies of $\SL_2(\RR)$.
In particular this answers in the affirmative the question of Jonathan Pila which was the following.
Consider the subset $Z$ of $\HH \times \HH$ which is 
$$
Z = \{ (\tau, g\tau) : \tau \in \HH \}
$$
where $g \in \SL_2(\RR)\backslash \SL_2(\QQ)$.
Is the image of $Z$ dense in $\CC \times \CC$?

The proof of Theorem \ref{t1} relies on the results of Ratner (see \cite{Rat})
on closure of unioptent one parameter subgroups in homogeneous spaces.

\section*{Acknowledgements.}

We thank Jonathan Pila for discussions around the topic of the second part of this paper.
We also thank Daniel Bertrand who raised the question of Logarithmic Ax-Lindemann theorem.
We are very grateful to Ngaiming Mok for many stimulations discussions.

\section{Weakly special subvarieties and monodromy.}

\subsection{Monodromy.}
Let $(G,X)$ be a Shimura datum. Recall that $G$ is a reductive group over $\QQ$ such that
$G^{ad}$ has no $\QQ$-simple factor whose real points are compact and $X$ is a $G(\RR)$-conjugacy class of a morphism $x \colon \SSS \lto G_{\RR}$ where $\SSS=\Res_{\CC/\RR}\GG_{m,\CC}$. The morphism $x$ is required to satify Deligne's conditions which imply that components of $X$ are Hermitian symmetric domains.
There is a natural notion of morphisms of Shimura data.
We fix a connected component $X^+$ of $X$ and we let $\Gamma = G(\QQ)_+ \cap K$
where $G(\QQ)_+$ is the stabiliser of $X^+$ in $G(\QQ)$.
Let $S$ be $\Gamma \backslash X^+$ and $\pi \colon X^+ \lto S$
be the natural morphism. 

To $(G,X)$, one associates the adjoint Shimura datum $(G^{ad}, X^{ad})$ with a 
natural morphism $(G,X) \lto (G^{ad}, X^{ad})$ induced by the natural map $G \lto G^{ad}$.
Notice that the this map identifies $X^+$ with a connected component of $X^{ad}$.
We have the following description of weakly special (or totally geodesic) subvarieties (see Moonen \cite{MoMo}):
\begin{teo}
A subvariety $Z$ of $S$ is totally geodesic if and only if there exists a sub-datum $(M,X_M)$
of $(G,X)$ and a product decomposition
$$
(M^{ad}, X_M^{ad}) = (M_1,X_1) \times (M_2, X_2)
$$
and a point $y_2$ of $X_2$ such that $Z = \pi(X^+_1 \times y_2)$ for a component
$X_1^+$ of $X_1$.
\end{teo}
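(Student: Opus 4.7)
The plan is to prove the two implications separately. The ``if'' direction is standard symmetric-space geometry: given $(M, X_M)$ with $(M^{ad}, X_M^{ad}) = (M_1, X_1) \times (M_2, X_2)$ and $y_2 \in X_2$, the slice $X_1^+ \times \{y_2\}$ is totally geodesic in the Riemannian product $X_M^{ad,+}$, and the inclusion $X_M^{ad,+} \hookrightarrow X^{ad,+} = X^+$ induced by the morphism of Shimura data is totally geodesic because a reductive $\RR$-subgroup of $G^{ad}(\RR)$ embeds its associated symmetric space as a totally geodesic submanifold. Composing these and projecting via the locally isometric $\pi$ exhibits $Z$ as totally geodesic in $S$.

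For the converse, let $Z \subset S$ be a totally geodesic subvariety; pick an analytic component $Y$ of $\pi^{-1}(Z)$ and a base point $x \in Y$. Using the Cartan decomposition $\Lie(G^{ad}(\RR)) = \mathfrak{k} \oplus \mathfrak{p}$ at $K^{ad}_x$ (so that $T_x X^+ \cong \mathfrak{p}$), the subspace $T_x Y$ is a Lie triple system $\mathfrak{m} \subset \mathfrak{p}$. Then $\mathfrak{h} := [\mathfrak{m}, \mathfrak{m}] \oplus \mathfrak{m}$ is a semisimple Lie subalgebra, and its associated connected real Lie subgroup $F \subset G^{ad}(\RR)^+$ satisfies $Y = F \cdot x$ with $F \cap K^{ad}_x$ maximal compact in $F$. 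This describes $Y$ Lie-theoretically as a real symmetric subspace of $X^+$.

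The crux is to promote $F$ to the real points of a $\QQ$-subgroup of $G^{ad}$. For this I would use the stabiliser $\Gamma_Y := \{\gamma \in \Gamma : \gamma \cdot Y = Y\}$: since $Z$ is algebraic and of finite volume, $\Gamma_Y$ is a lattice in $F$, and because $\Gamma_Y \subset G(\QQ)$, its Zariski closure $M$ in $G^{ad}$ is automatically defined over $\QQ$; Borel density then identifies $F$ with the noncompact factor of $M(\RR)^+$. This produces the sub-Shimura datum $(M, X_M)$. Finally, since $Y$ is complex and totally geodesic in $X_M^{ad,+}$, the de~Rham decomposition of $X_M^{ad,+}$ into irreducible Hermitian factors forces $Y$ to be a product of a subcollection of the full factors together with a single point $y_2$ in the complementary factors, yielding $(M^{ad}, X_M^{ad}) = (M_1, X_1) \times (M_2, X_2)$ with $Y = X_1^+ \times \{y_2\}$. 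The arithmeticity step producing the $\QQ$-structure is the principal obstacle; the rest is formal.
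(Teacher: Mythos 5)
The paper does not supply a proof of this theorem; it is quoted with the citation to Moonen \cite{MoMo}, where it appears as (essentially) Theorem 4.3. So there is no "paper proof" to compare against directly. The relevant comparison is with Moonen's argument, which is Hodge-theoretic: one attaches to $Z$ its generic Mumford-Tate group $M$ (giving the sub-datum $(M,X_M)$), then invokes the theorem of Andr\'e and Deligne asserting that the monodromy group of $Z$ has Zariski closure a \emph{normal} semisimple $\QQ$-subgroup of $M^{\mathrm{der}}$; normality is what produces the splitting $M^{\mathrm{ad}}=M_1\times M_2$, after which a rigidity/horizontality argument shows $Z$ is a fibre of the projection to $S_2$. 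Your route --- Lie triple system, the subgroup $F$ with $Y=F\cdot x$, then arithmeticity via a lattice and Borel density --- is genuinely different in flavour and is a natural thing to try.

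The chief problem is that your arithmeticity step produces the \emph{wrong} group. Your $M$ (the $\QQ$-Zariski closure of $\Gamma_Y$, which via Borel density is indeed a $\QQ$-form of $F$, i.e.\ $M_\RR = F_\RR$) is not in general a Shimura sub-datum: nothing forces $x$ to factor through $M_\RR$. The basic example is $G=G_1\times G_2$, $x=(x_1,x_2)$, $F=G_1(\RR)^+\times\{1\}$, where $Y=X_1^+\times\{x_2\}$; here the Zariski closure of $\Gamma_Y$ is $G_1$, but $x$ has a nontrivial $G_2$-component, so $(G_1,\,G_1(\RR)\cdot x)$ is not a sub-datum. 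What you have constructed is the factor $M_1$ of the statement, not $(M,X_M)$. To get the actual sub-datum you must pass to a larger group containing $x(\SSS)$ --- in practice the generic Mumford-Tate group of (a Hodge-generic point of) $Y$ --- and then you need to know that your $M$ sits as a \emph{normal} $\QQ$-factor of the derived group of that larger group in order to obtain the product $M^{\mathrm{ad}}=M_1\times M_2$. That normality is precisely the content of Andr\'e--Deligne; it is not something the de~Rham decomposition of $X_M^{\mathrm{ad},+}$ gives you for free, since a complex totally geodesic submanifold of a Hermitian product (e.g.\ the diagonal in $\HH\times\HH$) need not be a sub-product of the de~Rham factors unless the ambient group has already been shrunk to the correct $M$.

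Two smaller points. First, "$Z$ is algebraic and of finite volume" is asserted but not proved: finite Bergman volume of an algebraic subvariety of a Shimura variety is a genuine theorem (via extension over the Baily--Borel or a toroidal compactification) and should be cited, since it is the hinge on which the lattice claim turns. Second, when you set $\Fh=[\Fm,\Fm]\oplus\Fm$, semisimplicity of $\Fh$ and absence of compact factors of $F$ are not automatic for an arbitrary Lie triple system (flats give abelian $\Fh$); you need to use that $Y$ is a \emph{complex} totally geodesic submanifold, hence a Hermitian symmetric subspace of noncompact type, to rule out flat and compact factors. This is tacitly used but should be made explicit, and it interacts with the fact recorded in Proposition~\ref{factoring} of the paper that $x$ need only factor through $F\,Z_G(F)^0$ rather than through $F$ itself.

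In summary: the geometric opening and the Borel-density idea are sound, but the passage from the real group $F$ to a genuine Shimura sub-datum together with the product decomposition is incomplete; as written it conflates $M$ with $M_1$ and quietly requires Andr\'e--Deligne (or an equivalent normality statement) that you have not supplied.
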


Note that $X_M^{ad,+}=X_1^+ \times X_2^+$ (with a suitable choice of connected components) is a subspace of $X^+$.

%Assume from now on that the group $G$ is semisimple of adjoint type.

%Let $Z$ be a (irreducible) algebraic subvariety of $S$. Since weakly special subvarieties are %exactly totally geodesic subvarieties
%of $S$, components of the intersection of the weakly special subvarieties are weakly special.
%It follows that there exists a smallest weakly special subvariety $S'$ containing $Z$.
%sThis subvariety can be explicitly described in terms of the Shimura data.

We can without loss of any generality assume the group $\Gamma$ to be neat, i.e.
the stabiliser of each point of $X^+$ in $\Gamma$ to be trivial (replacing $\Gamma$ by a subgroup of finite index changes nothing to the property of a subvareity to be weakly special).
Fix a point $x$ of the smooth locus $Z^{sm}$ and $\wt{x} \in \pi^{-1}(x)\cap Z^{sm}$.
This gives rise to the monodromy representation
$$
\rho^m \colon \pi_1(Z^{sm}, x) \lto \Gamma
$$
whose image we denote by $\Gamma^m$.
By Theorem 1.4 (due to Deligne and Andr\'e) of \cite{MoMo}, we have $\Gamma^m \subset M^{der}(\QQ)\cap \Gamma$.

This can all be summarised in the following theorem.

\begin{teo} \label{monodromy}
Let $(G,X)$ be Shimura datum, $K$ a compact open subgroup
of $G(\AAA_f)$ and $\Gamma:= G(\QQ)_+ \cap K$ (assumed neat).

Let $S = \Gamma \backslash X^+$ and $Z$ an irreducible subvariety of $S$.
Let $M$ be the generic Mumford-Tate group on $Z$ and $X_M$ the $M(\RR)$-conjugacy class of $x$. 

Let $\Gamma^m \subset M^{der}(\QQ)\cap \Gamma$ be the
monodromy group attached to $Z$ as described above.

Let $(M^{ad},X_M^{ad}) = (M_1,X_1)\times (M_2,X_2)$  as in Theorem 4.3 of \cite{MoMo}.
In particular $M_1$ is the image of the neutral component of the Zariski closure of $\Gamma^m$ in $M^{ad}$.

Let $K_M^{ad} = K_1 \times K_2$ be a compact open subgroup containing the image of $K_M = M(\AAA_f)\cap K$
in $M^{ad}(\AAA_f)$ (here $K_i$s are compact open subgroups of $M_i(\AAA_f)$).
We let $X_i$ be the $M_i(\RR)$-conjugacy classes of $x$.

Let $S_M\subset S$ be a connected component of the image of $Sh_{M(\AAA_f)\cap K}(M,X_M)$
in $S$ containing $Z$.

Let $S_i$ ($i=1,2$) be appropriate components of $Sh_{K_i}(M_i,X_i)$ and
$S_M \lto S_ 1\times S_2$ be the natural map.

The image of $Z$ in $S_1 \times S_2$ is of the form
$Z_1 \times \{ z \}$ (see Theorem 4.3 of \cite{MoMo}) where $Z_1$ is a subvariety
of $S_1$ whose monodromy is Zariski dense in $M_1$and $z$ is a point of $S_2$.
\end{teo}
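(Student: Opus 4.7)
The plan is to read this off directly from Moonen's Theorem 4.3 combined with the Deligne--Andr\'e monodromy bound $\Gamma^m \subset M^{\der}(\QQ)\cap\Gamma$, both recalled above; the real work is the bookkeeping between $S$, $S_M$, $S_1$ and $S_2$.

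First I would lift $Z$ to an analytic component $\wt{Z}$ of $\pi^{-1}(Z) \subset X^+$. Because $M$ is by hypothesis the generic Mumford--Tate group on $Z$, every point of $Z^{sm}$ has Mumford--Tate group contained in $M$, so $\wt{Z} \subset X_M^+$. Projecting back to $S$ shows that $Z$ lies inside the component $S_M$ of the image of $\Sh_{M(\AAA_f)\cap K}(M,X_M)$ in $S$, which justifies passing to the map $S_M \lto S_1 \times S_2$.

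Next I would pass to $X_M^{\ad,+} = X_1^+ \times X_2^+$ and invoke Moonen's Theorem 4.3 directly: its content in our situation is that the image of $\wt{Z}$ in the product splits as $\wt{Z}_1 \times \{y_2\}$ for some $y_2 \in X_2^+$, and consequently the image of $Z$ in $S_1 \times S_2$ has the required form $Z_1 \times \{z\}$ with $z$ the class of $y_2$. Zariski density of the monodromy of $Z_1$ in $M_1$ is then essentially tautological: the monodromy of $Z_1$ is the image of $\Gamma^m$ under the composition $M \lto M^{\ad} \lto M_1$, and $M_1$ is by its very definition the identity component of the Zariski closure of this image.

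The step I would flag as the genuine (though already packaged) difficulty is the invocation of Moonen's theorem in the second step: the passage from ``the image of $\Gamma^m$ in $M_2^{\ad}$ is finite'' to ``the projection of $Z$ to $S_2$ is a single point'' rests on a rigidity property of the period map which is not formal. Everything else is compatibility of the arithmetic subgroups $K_M$, $K_1$, $K_2$ with the projections $M \lto M^{\ad} \lto M_i$, and can be arranged by shrinking $K$ if necessary, which is harmless for the property of monodromy being Zariski dense.
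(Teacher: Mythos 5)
Your proposal is correct and matches the paper's treatment: the paper gives no independent proof, presenting this theorem explicitly as a restatement of Moonen's Theorem 4.3 together with the Deligne--Andr\'e bound $\Gamma^m \subset M^{\der}(\QQ)\cap\Gamma$, which is exactly the combination you use. Your remark that the real mathematical content (the rigidity step turning finiteness of monodromy in $M_2$ into constancy of the projection to $S_2$) is already packaged inside Moonen's theorem is an accurate reading of where the paper places the burden.
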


\subsection{Weakly special subvarieties of $X^+$.}

In this section we give a precise description of totally geodesic 
(weakly special) subvarieties of $X^+$.

Let $(G,X)$ be a Shimura datum and $X^+$ a connected component of $X$.
For the purposes of this section, we can without loss of generality assume that $G$
is a semi-simple group of adjoint type. This is because there is a natural identification between connected components of $X^+$ and a connected component of $X^{ad}$.
We will now describe totally geodesic subvarieties of $X^+$ (that we will naturally call weakly special).

The group $G$ has no $\QQ$-simple factors whose real points are compact and there is 
 a morphism $x_0:\SSS \longrightarrow G_{\RR}$ satisfying  the following Deligne's conditions
such that $X^+=G(\RR)^+.x_0$.

(D1) The adjoint representation $\mbox{Lie}(G_{\RR})$ is of type 
$\{(-1,1), (0,0),(1,-1\}$. In particular $x(\GG_{m,\RR})$ is trivial.

(D2) The involution $x(\sqrt{-1})$ of $G_{\RR}$ is a Cartan involution.

This is a consequence of \cite{De} 1.1.17.

%Let $(H,X_H)$ be a Shimura subdatum of $(G,X)$.
%We have the following
%\begin{lem}
%Let $x \in X^+$, then 
%$$
%X^+ = H(\RR)^+.x \subset X^+ = G(\RR)^+ .x
%$$
%is a totally geodesic subvariety of $X^+$.
%\end{lem}

%We have the following general construction.
%Let $(H,X)$ be a sub-Shimura datum of $(G,X)$ with $H$ semi-simple. 
%Let $x \in X$ that factors through $H_{\RR}$. Then $X^+_H$ is 
%the $H(\RR)^+$-orbit of $x$.
%Let $H^c_{\RR}$ be the product of almost direct factors of $H_{\RR}$ whose
%real points are compact. Then there exists a subgroup $H^{nc}_{\RR}$ of
%$H_{\RR}$ such that $H_{\RR}$ is an almost direct product $H^c_{\RR} H^{nc}_{\RR}$.

%Let $p^c$ and $p^{nc}$ be projections of $H_{\RR}$ onto $H_{\RR}^c$ and $H^{nc}_{\RR}$
%respectively. Then $p^c(\SSS)$ is a real subtorus of $H^c_{\RR}$.
%The group $H'_{\RR} = H^{nc}_{\RR} p^c(\SSS)$ is a real reductive group without 
%anisotropic $\RR$-factors and
%$$
%X_H = H'(\RR) x
%$$
%We call $(H'_{\RR}, X_H)$ the \emph{real Shimura datum} attached to $(H,X_H)$.

We have the following:

\begin{prop} \label{factoring}
Let $Z$ be a totally geodesic complex subvariety of $X^+$. There exists 
a semi-simple real algebraic subgroup $F$ of $G_{\RR}$ without compact factors  and some $x\in X$
such that $x$ factors through $FZ_{G}(F)^0$ such that $Z=F(\RR)^+.x$.
Conversely, let $F$ be  a semi-simple real algebraic subgroup of $G_{\RR}$ without compact factors  and let $x\in X$
such that $x$ factors through $FZ_{G}(F)^0$. Then $F(\RR)^+.x$ is a totally geodesic subvariety of $X^+$.
\end{prop}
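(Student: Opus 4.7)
The plan is to argue the two directions separately. For the converse, I would proceed entirely in terms of the Cartan involution and the Hodge decomposition at $x$; for the forward direction, I would invoke Moonen's structure theorem (cited just above this subsection) and then lift the adjoint decomposition it produces to a subgroup of $G$ itself.

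For the converse, suppose $F \subset G_\RR$ is semisimple without compact factors and $x \in X$ has image contained in $F Z_G(F)^0$. Set $\theta_x = \mathrm{Ad}(x(\iii))$, the Cartan involution of $G_\RR$ at $x$. The key observation is that $\theta_x$ preserves $\Lie(F)$: writing $x(\iii) = fz$ with $f \in F(\RR)$ and $z \in Z_G(F)^0(\RR)$, the centralising factor $z$ acts trivially on $\Lie(F)$, so $\theta_x|_{\Lie F} = \mathrm{Ad}(f)|_{\Lie F}$ preserves $\Lie F$. Hence $\Lie F = (\Lie F \cap \mathfrak{k}) \oplus (\Lie F \cap \Fp)$ is a Cartan decomposition of $\Lie F$ (the no-compact-factors hypothesis ensures that $\Lie F \cap \mathfrak{k}$ is maximal compact in $\Lie F$), and $F(\RR)^+\cdot x$ is a totally geodesic submanifold with $F \cap K_x$ maximal compact in $F(\RR)^+$. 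Since the image of $x$ normalises $\Lie F$, the Hodge decomposition of $\Fg_\CC$ induced by $x$ restricts to one on $\Lie(F)_\CC$, making the tangent space $\Lie F \cap \Fp$ stable under the complex structure, so $F(\RR)^+\cdot x$ is complex analytic.

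For the forward direction, let $Z \subset X^+$ be a totally geodesic complex subvariety. Applying Moonen's theorem produces a sub-datum $(M,X_M)$ of $(G,X)$, a product decomposition $(M^{ad},X_M^{ad}) = (M_1,X_1)\times(M_2,X_2)$, and a point $y_2 \in X_2$ such that $Z$ corresponds to $X_1^+ \times \{y_2\}$ under the inclusion $X_M^+ \hookrightarrow X^+$. To lift the $M_1$ factor from the adjoint quotient to a subgroup of $G$, I would pass to the simply connected cover $M^{\rm sc} = \wt M_1 \times \wt M_2$ of $M^{\der}$ and take $F$ to be the image in $M^{\der} \subset G$ of the product of the non-compact almost simple factors of $\wt M_1$. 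Then $F$ is a semisimple real algebraic subgroup of $G_\RR$ without compact factors, $F(\RR)^+$ acts on $X^+$ via the $M_1(\RR)^+$-factor of $M^{ad}(\RR)^+$, and $F(\RR)^+\cdot x = Z$ for any lift $x \in X$ of a basepoint of $X_1^+ \times \{y_2\}$.

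To verify that $x$ factors through $FZ_G(F)^0$, observe that the image of $x \colon \SSS \to G_\RR$ lies in $M_\RR = F \cdot F' \cdot Z(M)^0_\RR$, where $F'$ is the analogous lift of $M_2$: both $F'$ (the orthogonal factor in the simply connected cover) and $Z(M)^0$ commute with $F$, and hence are contained in $Z_G(F)^0$. The main obstacle is precisely the bookkeeping of the forward direction---identifying a lift of $M_1$ to a semisimple subgroup of $G$ without compact factors whose real orbit through $x$ is exactly $Z$, and verifying that the chosen $x$ still lies in $F Z_G(F)^0$; the remainder of the argument is direct Lie-theoretic calculation.
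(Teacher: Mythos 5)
Your converse direction is correct, and it takes a cleaner route than the paper. You argue directly that $\mathrm{Ad}(x(\iii))$ preserves $\Lie F$ (because the image of $x$ lies in $FZ_G(F)^0$, which normalises $F$), so its restriction is a Cartan involution of $\Lie F$, the orbit $F(\RR)^+\cdot x$ is totally geodesic, and $J$-invariance of the tangent space follows because the Hodge decomposition restricts to $\Lie(F)_\CC$. The paper instead sets $H=FZ_G(F)^0$, uses compactness of $Z_G(H(\RR))$ and \cite{Ul2} lemma 3.13 to show $H$ is reductive, verifies Deligne's conditions (D1)-(D2) for $H$ to conclude $X_H^+=H(\RR)^+\cdot x$ is Hermitian symmetric, and then isolates the $F$-factor from a decomposition $H_\RR\simeq F F_1^{nc}F_1^c$. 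Your argument avoids this detour. One minor quibble: the parenthetical ``the no-compact-factors hypothesis ensures that $\Lie F\cap\mathfrak k$ is maximal compact'' is misplaced --- that $\Lie F\cap\mathfrak k$ is maximal compact in $\Lie F$ follows simply from $\theta_x|_{\Lie F}$ being a Cartan involution of $\Lie F$, which holds whenever a Cartan involution of $\Fg$ preserves the semisimple subalgebra $\Lie F$ (and would hold even if $F$ had compact factors).

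The forward direction, however, has a genuine gap. You cannot apply Moonen's theorem here. Moonen's theorem (Theorem 2.1 of the paper) characterises totally geodesic \emph{algebraic subvarieties of the Shimura variety} $S=\Gamma\backslash X^+$, and the sub-datum $(M,X_M)$ it produces is defined over $\QQ$ precisely because such a $Z\subset S$ has a Mumford--Tate group and a monodromy group that are $\QQ$-groups. The proposition you are proving, by contrast, is about totally geodesic complex subvarieties of the \emph{symmetric space} $X^+$: such a $Z$ is of the form $F(\RR)^+\cdot x$ for a \emph{real} algebraic subgroup $F\subset G_\RR$ which in general is not defined over $\QQ$ and whose orbit need not descend to an algebraic subvariety of $S$ (indeed, understanding the closure of $\pi(Z)$ for such a $Z$ is precisely the nontrivial content of the rest of the paper). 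So the output of Moonen's theorem --- a $\QQ$-sub-datum and an adjoint product decomposition --- is simply not available. The correct argument, as in the paper, is Lie-theoretic: use the classical fact that a totally geodesic submanifold of a Riemannian symmetric space through $x$ corresponds to a Lie triple system $\mathfrak s\subset\Fp$ and equals $F(\RR)^+\cdot x$ for the semisimple subgroup $F$ with $\Lie F=\mathfrak s\oplus[\mathfrak s,\mathfrak s]$ (discarding compact factors, which fix $x$); then use the complex structure --- $T_x Z=\Lie F\cap\Fp$ is stable under $\mathrm{Ad}(x(U^1))$ --- together with the Cartan decomposition to conclude that $x(\SSS)$ normalises $F$, hence $x(\SSS)\subset FZ_G(F)^0$ (the homomorphism $x(\SSS)\to\mathrm{Out}(F)$ is trivial because $x(\SSS)$ is connected), which is exactly the required factoring.
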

\begin{proof}
 let $F$ be  a semi-simple real algebraic subgroup of $G_{\RR}$ without compact factors  and let $x\in X$
such that $x$ factors through $H:=FZ_{G}(F)^0$. Then 
$$
Z_{G}(H(\RR))\subset Z_{G}(x(\sqrt{-1})). 
$$
As $Z_{G}(x(\sqrt{-1}))$ is a compact subgroup of $G(\RR)$ 
so is $Z_{G}(H(\RR))$. By using \cite{Ul2} lemma 3.13 we see that
$H$ is reductive. 

Then the proof of \cite{Ul2} lemma 3.3 shows that $X_H:=H(\RR)^+.x$
is an hermitian symmetric subspace of $X^+$. We give the arguments
to be as self contained as possible. 

As ${\rm Lie }(H_{\RR})$ is a sub vector space  of ${\rm Lie }(G_{\RR})$ the Hodge weights of 
${\rm Lie }(H_{\RR})$ are $\{(-1,1), (0,0), (1,-1)\}$. Then using Deligne \cite{De} 1.1.17
we just need to prove that $x(\sqrt{-1})$ induces a Cartan involution of $H^{ad}$. 
As the square of $x(\sqrt{-1})$ is in the centre of $H(\RR)$, by Deligne \cite{De} 1.1.15, it's enough to check that
$H_{\RR}$ admits a faithful real $x(\sqrt{-1})$-polarizable representation $(V,\rho)$.
We may take $V={\rm Lie G_{\RR}}$ for the adjoint representation and the $x(\sqrt{-1})$-polarization
induced from the Killing form $B(X,Y)$.

Then $H_{\RR}$ is the almost direct product $H_{\RR}\simeq F F_1^{nc}F_1^c$
where $F_1$ is either trivial or  semi-simple without compact factors and $F_1^c$ is 
reductive  with $F_1^c(\RR)$ compact. If $F_1^{nc}$ is trivial $X_F^+=X_H^+$ is hermitian symmetric.
If $F_1^{nc}$ is not trivial, we have a decomposition $X_H^+=X_{F}^+\times X_{F_1^{nc}}^+$
is a product of hermitian subspaces and we have the natural identification of 
$X_F^+$ with $X_F^+\times \{x_1\}$ where $x_1$ is the projection of $x$ on $X_{F_1^{nc}}^+$.
In any case $X_F^+$ is hermitian symmetric and totally geodesic in $X^+$.

Conversely a totally geodesic subvariety of $X^+$ is of the form $X_F^+=F(\RR)^+.x$
for a semi-simple subgroup $F_{\RR}$ of $G_{\RR}$ without compact factors.  
Let $T_x(X_F^+)\subset T_x(X^+)$  be the tangent space of $X_F^+$ at $x$.
Let $U^1\subset \SSS$ be the unit circle. 
The complex structure on $T_x(X^+)$ is given by the adjoint action of $x(U^1)$.
If $X_F$ is a complex subvariety, then  $T_x(X_F^+)$ is stable by 
 $x(U^1)$. Using Cartan decomposition we see that  $x(U^1)=x(\SSS)$ normalizes $F$.
 Let $F_1=x(\SSS) F$, then $F_1$ is reductive and is contained in $FZ_{G}(F)^0$.
 It follows that $x$ factors through  $FZ_{G}(F)^0$.

\end{proof}

\begin{defini}
An algebraic group $H$ over $\QQ$ is said to be of type $\cH$ if its 
radical is unipotent and if $H/R_u(H)$ is an almost direct product 
of $\QQ$ simple factors $H_i$ with $H_i(\RR)$ non-compact.
Furthermore we assume that at least one of those factors not to be trivial.
\end{defini}

Let $H \subset G$ be a subgroup of type $H$ and let us assume that $G$ is of adjoint type.
We will now explain how to attach a hermitian symmetric space $X_H$ to
a group of type $\cH$ and explain that $X_H$ is independent of the 
choice of a Levi subgroup in $H$.

The domain $X^+$ is the set of maximal compact subgroups of $G(\RR)^+$.
Let $x\in X^+$,  we denote by $K_x$ the associated maximal compact subgroup
of $G(\RR)^+$.  Let $H$ be a subgroup of type $\cH$ and let $L$ be a Levi subgroup
of $H$. We have a Levi decomposition $H=R_u(H).L$. 
Assume that $K_x\cap L(\RR)^+$ is a maximal compact subgroup of $L(\RR)^+$.
Then $X_L^+=L(\RR)^+.x\subset X^+$ is the symmetric space associated to $L$
and is independent of the choice of $x\in X^+$ such that $K_x\cap L(\RR)^+$
is a maximal compact subgroup of $L(\RR)^+$. Let $X_H^+:=R_u(H)X_L(\RR)^+$,
then $X_H^+$ is independent of the chosen Levi decomposition of $H$.
This can be seen as follows. The Levi subgroups of $H$ are conjugate by an
element of $R_u(H)$. Let $L'$ be a Levi of $H$ and $w\in R_u(H)$ such that
$L'=wLw^{-1}$. Let $x'=w.x$. Then $K_{x'}$ is a maximal compact subgroup of $G(\RR)^+$
such that $K_{x'}\cap L'(\RR)^+$ is a maximal compact subgroup of $L'(\RR)^+$ and
$$
R_u(H).X_{L'}^+=R_u(H).L'(\RR)^+.x'=R_u(H).wL(\RR)^+.x=R_u(H).X_L^+.
$$
This shows that the space $X_H^+$ is independent of the choice of the Levi.

\begin{defini}
A real weakly special subvariety of $S$ is  a real analytic subset of $S$
of the form
$$
Z=\Gamma\cap H(\RR)^+\backslash H(\RR)^+.x
$$
where $H$ is an algebraic subgroup of $G$ of type $\cH$ and $x\in X^+$.

In the case where $K_x\cap L(\RR)^+$ is a maximal compact subgroup of $L(\RR)^+$
for some Levi subgroup of $H$,  $H(\RR)^+/K_x \cap H(\RR)^+$ is a real symmetric space.

%By the previous discussion, $Z$ is independent 
%of the choice of $x$ and $L$.
\end{defini}

We have the following proposition.

\begin{prop}
Let $Z$ be a real weakly special subvariety of $S$.
Then the Zariski closure $Z^{Zar}$ of $Z$ is
weakly special.
\end{prop}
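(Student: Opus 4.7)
The plan is to combine the logarithmic Ax-Lindemann theorem from Section~3 with the monodromy theorem (Theorem~\ref{monodromy}) and the Borel density theorem in order to identify $W:=Z^{Zar}$ with a weakly special subvariety. Without loss of generality I may assume $Z$ is connected, and I let $W$ be the irreducible component of $Z^{Zar}$ containing~$Z$; it suffices to prove that each such $W$ is weakly special. Applying logarithmic Ax-Lindemann to~$W$, let $\Xi$ denote the analytic component of $\pi^{-1}(W)$ containing $H(\RR)^+\cdot x$; then its Zariski closure $\Xi^{Zar}$ in $X^+$ is a weakly special subvariety, which after Proposition~\ref{factoring} can be written in the form $X_1^+\times\{y_2\}$ coming from a sub-Shimura datum $(M,X_M)\subseteq (G,X)$, a product decomposition $(M^{ad},X_M^{ad})=(M_1,X_1)\times(M_2,X_2)$, and a point $y_2\in X_2^+$. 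Its projection $W^\dagger:=\pi(\Xi^{Zar})$ is then a weakly special subvariety of $S$ containing $W$, and the remaining task is to verify $W=W^\dagger$.

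To pin down the sub-datum I would combine two complementary ingredients. Geometrically, the inclusion $H(\RR)^+\cdot x\subseteq X_1^+\times\{y_2\}$ forces $H(\RR)^+$ to fix $y_2$, so the image of $H$ in $M_2(\RR)^+$ lies in the compact stabilizer of~$y_2$. The hypothesis that $H$ is of type~$\cH$, namely that $H/R_u(H)$ has no compact $\QQ$-simple factors and $R_u(H)$ is unipotent (hence has no non-trivial compact subgroup), then forces the image of $H$ in $M_2$ to be trivial, so $H$ lies in the preimage of $M_1$ under $M\to M^{ad}$. Arithmetically, $\Gamma\cap H(\RR)^+$ is an arithmetic lattice in $H(\RR)^+$ by Borel--Harish-Chandra (using that $H$ has no non-trivial $\QQ$-characters) and is therefore Zariski dense in $H$ by Borel density; being the monodromy of~$Z$, it sits inside the monodromy group $\Gamma^m$ of~$W$, so Theorem~\ref{monodromy} shows that the Mumford--Tate group of~$W$ contains~$H$ and that the log Ax-Lindemann decomposition is compatible with the Mumford--Tate decomposition of~$W$.

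The final step, which I expect to be the main obstacle, is to prove that the real orbit $H(\RR)^+\cdot x$ is Zariski dense in $\Xi^{Zar}=X_1^+\times\{y_2\}$, so that $\Xi=\Xi^{Zar}$ and $W = W^\dagger$. The key is to verify that the complex orbit of the image $\ol{H}(\CC)$ of $H$ in $M_1(\CC)$ through the projection $\ol{x}\in X_1^+$ is open in~$X_1^+$; an open subset being automatically Zariski dense, this would close the argument. The openness is a dimension count combining the triviality of the image of~$H$ in~$M_2$ (which focuses the entire $H$-action on~$X_1^+$) with the type-$\cH$ structure of~$H$ (which keeps the complex stabilizer of~$\ol{x}$ in $\ol{H}(\CC)$ small, so that the $\ol{H}(\CC)$-orbit has complex dimension at least~$\dim_\CC X_1^+$). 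Once this density is established, $W=W^\dagger$ is weakly special and the proposition follows.
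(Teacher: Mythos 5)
Your approach diverges genuinely from the paper's. The paper reduces by monodromy to the case where $\Gamma^m$ is Zariski dense in $G$, and then finishes with Hecke correspondences: for $q\in H(\QQ)^+$ one has $Z\subset T_qZ$, hence $Z^{Zar}\subset T_q(Z^{Zar})$, and then a prime $p$ larger than the Nori constant together with Lemma~6.1 of \cite{UY1} yields an element $q\in T(\QQ)$ for which the orbits of $T_q+T_{q^{-1}}$ are dense, forcing $Z^{Zar}=S$. You instead try to deploy the logarithmic Ax--Lindemann theorem (from Section~3 of the paper; there is no circularity, as that proof only uses the monodromy theorem, not this proposition) and then close with a Zariski-density claim for the real orbit.

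The gap is in your final step, and it is essential rather than cosmetic. The asserted openness of $\ol{H}(\CC)\cdot\ol{x}$ in $X_1^+$, and hence the Zariski density of $H(\RR)^+\cdot x$ in $\Xi^{Zar}$, is exactly the content of the proposition after the monodromy reduction; you have not reduced to something simpler. The parenthetical justification --- that the type-$\cH$ structure ``keeps the complex stabilizer of $\ol{x}$ in $\ol{H}(\CC)$ small, so that the $\ol{H}(\CC)$-orbit has complex dimension at least $\dim_\CC X_1^+$'' --- is not an argument: nothing in the definition of type~$\cH$ bounds the complex stabilizer, and the complex dimension of the $\ol{H}(\CC)$-orbit is governed by $H$ alone, not by $X_1^+$. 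Concretely, for $H=\SL_2$ diagonally embedded in $G=\SL_2^n$, the $\ol{H}(\CC)$-orbit of a point of $\HH^n$ has complex dimension~$1$; what saves the conclusion in that example is that the monodromy reduction forces $M_1$ to be $\PGL_2$ and $X_1^+$ to be one-dimensional. Your sketch does not exhibit how the Zariski density of $\Gamma\cap H(\RR)^+$ in $M_1$ feeds into the dimension count, and without that the dimension count is simply false in general. Until that inequality between $\dim_\CC\ol{H}(\CC)\cdot\ol{x}$ and $\dim_\CC X_1^+$ is actually proved from the monodromy input, the argument does not go through. There is also a secondary point you gloss over: Zariski density of $Z'=H(\RR)^+\cdot x$ inside the realisation $\Xi^{Zar}\subset\CC^n$ does not tautologically give $Z^{Zar}=\pi(\Xi^{Zar})$ in~$S$; one must relate Zariski closure upstairs in $X^+$ to Zariski closure downstairs in~$S$, which is exactly where the paper's use of Hecke correspondences does quantitative work that your outline does not replace.
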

\begin{proof}
By definition, $Z$ is of the form $Z = H(\RR)^+ \cdot x$ where
$H$ is a group of type $\cH$.
Let $S_M$ be as in Theorem \ref{monodromy} the smallest special subvariety
containing $Z^{Zar}$.
%Since $Z^{Zar}$ is contained in $S_M$, we can replace $S$ by $S_M$.

Let $S_1 \times S_2$ be the product of Shimura varieties as in Theorem \ref{monodromy}
such that the image of $Z^{Zar}$ in $S_1 \times S_2$ is of the form $Z_1 \times \{ z \}$
where $Z_1$ is a subvariety of $S_1$ whose monodromy $\Gamma^m_1$ is Zariski dense in $M_1$ and
$z$ is a Hodge generic point of $S_2$.

To prove that $Z^{Zar}$ is weakly special, it is enough to show that $Z_1 = S_1$.
In what follows, we replace $S$ by $S_1$ and $Z$ by $Z_1$.

%Recall that $Z$ is of the image  of $H(\RR)^+ \cdot x \subset X^+$
%where  $H$ is a group of type $H$.

For any $ q \in H(\QQ)^+$, we have
that $Z \subset T_qZ$, therefore
 $$Z \subset Z^{Zar}\cap T_q(Z^{Zar}).$$
Since $Z^{Zar}\cap T_q(Z^{Zar})$ is algebraic, we have
$$
Z^{Zar} \subset Z^{Zar}\cap T_q(Z^{Zar})
$$
and therefore, for each $q \in H(\QQ)$ we have
$$
Z^{Zar}\subset T_q(Z^{Zar}).
$$
Let $T$ be a non-trivial subtorus of $H$.
We define the Nori constant $C(Z^{Zar})$ of $Z^{Zar}$ as in \cite{UY1}, section 4.
Let $p > C(Z^{Zar})$ and $q \in T(\QQ)$ given by Lemma 6.1 of \cite{UY1}.
Then $T_q(Z^{Zar})$ is irreducible and the orbits of $T_q + T_{q^{-1}}$ are dense in $S$.
This implies that
$Z^{Zar} = S$ as required.
\end{proof}

\section{Logarithmic Ax-Lindemann.}

Let $S = \Gamma \backslash X^+$ as before and consider a realisation $X^+ \subset \CC^n$
(in the sense of \cite{Ul2}).  In particular $X^+$ is a semi-algebraic set and the action of $G(\RR)^+$ is semi-algebraic.

Let $\wt{Y}$ be a complex analytic subset of $X^+$. Then the Zariski closure $\ol{\wt{Y}}^{Zar}$
in $\CC^n$ is an algebraic subset of $\CC^n$ and $\ol{\wt{Y}}^{Zar} \cap X^+$ has finitely many
analytic components.
By slight abuse of notation, we refer to $\ol{\wt{Y}}^{Zar} \cap X^+$ as \emph{Zariski closure of $\wt{Y}$}.
These components are algebraic in the sense of the definition given in the Appendix B of \cite{KUY}.

\begin{teo}  [Logarithmic Ax-Lindemann]
Let $\pi \colon X^+ \lto S$ be the uniformisation map.
Let $Y$ be an algebraic subvariety of $S$ and let $Y'$ be
an analytic component of $\pi^{-1}(Y)$. The Zariski closure of
$Y'$ is a weakly special subvariety.
\end{teo}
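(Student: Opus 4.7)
My plan is to combine the monodromy description of Theorem \ref{monodromy} with a Zariski density argument to show that $Z:=$ Zariski closure of $Y'$ is precisely a single orbit of a carefully identified algebraic subgroup. Let $M$ be the generic Mumford--Tate group on $Y$, and let $(M^{ad},X_M^{ad})=(M_1,X_1)\times(M_2,X_2)$ be the decomposition supplied by Theorem \ref{monodromy}. Then $Y$ maps to $Z_1\times\{z\}$ inside $S_1\times S_2$, and the monodromy group $\Gamma^m$ of $Y$, which sits inside $M^{der}(\QQ)\cap\Gamma$, has image in $M^{ad}$ whose Zariski closure has neutral component $M_1$.

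Any point $\wt y\in Y'$ with generic Mumford--Tate group $M$ factors through $M_\RR$, hence lies in $X_M^+\subset X^+$; by irreducibility $Y'\subset X_M^+$ entirely. After identifying $X_M^+$ with $X_M^{ad,+}=X_1^+\times X_2^+$, Theorem \ref{monodromy} gives
$$
Y'=W_1'\times\{z'\},
$$
where $z'\in X_2^+$ is a fixed lift of $z$ and $W_1'\subset X_1^+$ is an analytic component of the preimage of $Z_1$. Since $X_1^+\times\{z'\}$ is a hermitian symmetric subdomain of $X_M^+$, hence a complex totally geodesic subvariety of $X^+$, Proposition \ref{factoring} guarantees that it is weakly special (and in particular algebraic in $X^+$), so $Z\subseteq X_1^+\times\{z'\}$.

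For the reverse inclusion I would exploit monodromy invariance. The group $\Gamma^m$ preserves $Y'$ and therefore preserves $Z$. Because the $G(\RR)^+$-action on the realization $X^+\subset\CC^n$ is semi-algebraic, the stabilizer of the algebraic set $Z$ inside $G(\RR)^+$ is a real algebraic subgroup; containing $\Gamma^m$, it must contain the neutral component $H_1$ of the Zariski closure of $\Gamma^m$ in $G$. By Theorem \ref{monodromy} the image of $H_1$ in $M^{ad}$ is $M_1$, so $H_1(\RR)^+$ acts on $X_1^+\times X_2^+$ as $M_1(\RR)^+$ on the first factor and trivially on the second. Since $M_1(\RR)^+$ is transitive on $X_1^+$, the $H_1(\RR)^+$-orbit of any point of $Y'$ fills $X_1^+\times\{z'\}$, giving $X_1^+\times\{z'\}\subseteq Z$. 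Combining the two inclusions, $Z=X_1^+\times\{z'\}$, which is weakly special.

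The delicate step is the passage from Zariski density of the discrete group $\Gamma^m$ to invariance of the algebraic set $Z$ under the real Lie group $H_1(\RR)^+$; this is where one must convert the semi-algebraic stabilizer of $Z$ into an honest algebraic subgroup of $G$, and it is the point at which the semi-algebraicity of the realization is genuinely used. Once this bridge between the discrete monodromy and the continuous Lie-group action is in place, the theorem is a formal consequence of the monodromy description of Theorem \ref{monodromy} together with the characterization of complex totally geodesic subvarieties of $X^+$ provided by Proposition \ref{factoring}.
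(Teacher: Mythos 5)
Your proposal is correct and follows essentially the same route as the paper: monodromy invariance of $Y'$ forces the Zariski closure $Z$ to be stabilised by $\Gamma^m$, semi-algebraicity of the realisation and of the $G(\RR)^+$-action upgrades the stabiliser of $Z$ to a real algebraic subgroup, and Zariski density of $\Gamma^m$ then forces this stabiliser to contain $H_1(\RR)^+$, whose orbit fills out the weakly special set. The only cosmetic difference is that the paper first reduces via Theorem \ref{monodromy} to the factor $S_1$ where the monodromy is Zariski dense in $G$ and concludes $Z = X^+$ directly, whereas you keep the product $X_1^+ \times X_2^+$ and verify both inclusions explicitly.
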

\begin{proof}
%Let $X^+ \lto X^{+,\vee}$ be the Borel embedding which is $G(\RR)^+$-equivariant.
%The image of an algebraic subvariety of $X^{+,\vee}$ by an element of $G(\RR)$
%is algebraic. 

 Let $\wt{Y}$ be an analytic component of $Y'$.
As in the previous section we can replace $S$ by $S_1$ and $Y$ by $Y_1$ given by the 
Proposition \ref{monodromy}. In doing this we attach the monodromy to a point
$y \in Y^{sm}$ and $\wt{y} \in Y'$.
Let $\Gamma_Y$ be the monodromy group attached to $Y$.
Notice that $\Gamma_Y$ is the stabiliser of $Y'$ in $\Gamma$.
Then, with our assumptions, $\Gamma_Y$ is Zariski dense in $G$.

Let $\alpha \in \Gamma_Y$. 
We have 
$$
\alpha Y' = Y'
$$
Therefore,
$$
{(\alpha Y')}^{Zar}  = {Y'}^{Zar}
$$
We also have 
$$
\alpha {Y'}^{Zar} \supset \alpha Y'
$$
and since $\alpha {Y'}^{Zar} $ is algebraic, we have
$$
\alpha {Y'}^{Zar}  \supset {\alpha Y'}^{Zar}
$$
The same argument with $\alpha^{-1}$ instead of $\alpha$
shows that the reverse inclusion holds and therefore
$$
\alpha {Y'}^{Zar}  = {\alpha Y'}^{Zar} = {Y'}^{Zar}
$$
It follows that ${Y'}^{Zar}$ is stabilised by $\Gamma_Y$.

Consider the stabiliser $G_Y$ of ${Y'}^{Zar}$ in $G(\RR)$.
Since ${Y'}^{Zar}$ is semi-algebraic and the action of $G(\RR)^+$ on
$X^+$ is semi-algebraic, $G_Y$ is semi-algebraic.
Furthermore, $G_Y$ is analytically closed and hence is a real algebraic group.
Since $G_Y$ contains $\Gamma_Y$ which is Zariski dense in $G_{\RR}$, we see that
$G_Y = G(\RR)^+$.
 It follows that ${Y'}^{Zar} = S$ as required.
\end{proof}

\section{Facts from ergodic theory: Ratner's theory.}

In this section  we recall some results from ergodic theory of homogeneous 
varieties to be used in the next section.
The contents of this section are mainly taken from  Section 3 of \cite{CU1}.
We present results in the way they are presented in \cite{Ul}.

Let $G$ be a semi-simple algebraic group over $\QQ$. 
We assume that $G$ has no $\QQ$-simple simple factors that
are anisotropic over $\RR$. This condition is satisfied by all groups 
defining Shimura data.

Let
$\Gamma$ be an arithmetic lattice in $G(\RR)^+$ and let
$\Omega = \Gamma \backslash G(\RR)^+$.

%We now give definitions of groups of type $\cH$ and $\cK$.

%\begin{defini} \label{typeH}
%An algebraic group $H$ over $\QQ$ is said to be of type $\cH$ if its 
%radical is unipotent and if $H/R_u(H)$ is an almost direct product 
%of $\QQ$ simple factors $H_i$ with $H_i(\RR)$ non-compact.
%\end{defini}

We have already defined a subgroup $H\subset G$ of type $\cH$,
we now define a group of type $\cK$.

\begin{defini} \label{typeK}
Let $F \subset G(\RR)$ be a closed connected Lie subgroup.
We say that $F$ is of type $\cK$ if

\begin{enumerate}
\item $F \cap \Gamma$ is a lattice in $F$.
In particular $F \cap \Gamma \backslash F$ is closed in $\Gamma \backslash G(\RR)^+$.
We denote by $\mu_F$ the $F$-invariant normalised measure on $\Gamma \backslash G(\RR)^+$.
\item
The subgroup $L(F)$ generated by one-parameter unipotent subgroups
of $F$  acts ergodically on $F \cap \Gamma \backslash F$ with respect to $\mu_F$.
\end{enumerate} 

For the purposes of this section, we in addition assume $F$ to be semisimple.
\end{defini}

The relation between types $\cK$ and $\cH$ is as follows (see \cite{CU}, lemme 3.1 and 3.2):
\begin{lem} \label{Equivalence}
\begin{enumerate}
\item If $H$ is of type $\cH$, then $H(\RR)^+$ is of type $\cK$.
\item It $F$ is a closed Lie subgroup of $G(\RR)^+$ of type $\cK$, then there exists
a $\QQ$ subgroup $F_{\QQ}$ of $G$ of type $\cH$ such that
$F = F(\RR)^+$.
\end{enumerate}
\end{lem}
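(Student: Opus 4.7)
The plan is to derive each implication from standard facts about arithmetic lattices and unipotent dynamics. The main ingredients are the Borel--Harish-Chandra theorem (finite covolume criterion), Borel's density theorem, and Moore's ergodicity theorem together with the Mautner phenomenon.

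For part (1), suppose $H$ is of type $\cH$. First I would show that $\Gamma \cap H(\RR)^+$ is a lattice in $H(\RR)^+$. Since $R_u(H)$ is unipotent and $H/R_u(H)$ is semisimple, $H$ carries no non-trivial $\QQ$-rational character, so Borel--Harish-Chandra produces the lattice. Next I would identify enough of $L(H(\RR)^+)$: the unipotent radical $R_u(H)(\RR)$ is automatically contained in it (a connected unipotent real group is generated by its one-parameter subgroups), and by assumption each $\QQ$-simple factor $H_i$ of $H/R_u(H)$ has $H_i(\RR)$ non-compact, so the non-compact $\RR$-simple constituents are themselves generated by unipotents. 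Ergodicity of the action of $L(H(\RR)^+)$ on $(\Gamma \cap H(\RR)^+)\backslash H(\RR)^+$ would then follow: on the semisimple quotient by Moore's theorem applied to non-compact simple factors, and through the unipotent radical by the Mautner phenomenon.

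For part (2), suppose $F$ is a closed, connected, semisimple Lie subgroup of $G(\RR)^+$ of type $\cK$. Let $F_\QQ$ be the Zariski closure in $G$ of the discrete subgroup $F \cap \Gamma$; since $F \cap \Gamma \subset \Gamma \subset G(\QQ)$, this is automatically a $\QQ$-subgroup of $G$. Borel's density theorem, applied to the lattice $F \cap \Gamma$ in $F$, gives $F \cap \Gamma$ Zariski dense in $F$, hence $F_\QQ(\RR)^+ = F$. It remains to show that $F_\QQ$ is of type $\cH$. Semisimplicity of $F$ forces $R_u(F_\QQ) = 1$. If some $\QQ$-simple factor of $F_\QQ$ had compact real points, then that factor would contain no non-trivial unipotent element, so $L(F)$ would be entirely contained in the kernel of the projection of $F$ onto this factor, producing a non-trivial invariant function on $(F \cap \Gamma)\backslash F$ and contradicting ergodicity. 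At least one $\QQ$-simple factor must be non-trivial, for otherwise $L(F)$ would be trivial and could not be ergodic on a positive-dimensional space.

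The main obstacle is the careful interplay of $\QQ$- and $\RR$-structures in part (1): a $\QQ$-simple factor of $H/R_u(H)$ whose real points are non-compact can still split over $\RR$ as an almost direct product containing compact $\RR$-simple factors, and these compact factors are not directly seen by one-parameter unipotent subgroups. One must argue that they do not obstruct ergodicity; this follows because the projection of an arithmetic lattice to each $\RR$-factor of a $\QQ$-simple factor is dense (a consequence of Borel's density and $\QQ$-irreducibility), so the compact $\RR$-components are dynamically absorbed into the orbits of $L(H(\RR)^+)$ and Moore's theorem still applies.
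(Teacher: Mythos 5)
The paper does not actually prove this lemma: it simply refers the reader to lemmes 3.1 and 3.2 of \cite{CU}. Your proof therefore cannot be compared line by line to the paper's; what I can say is that your strategy (Borel--Harish-Chandra for the lattice statement, Borel density, Moore/Mautner for ergodicity, and Zariski closure of $F\cap\Gamma$ as the candidate $\QQ$-group) is the standard route one would expect the cited reference to follow, and part (1) is essentially fine as you present it.

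There is, however, a real gap in part (2), and it is precisely the issue you flagged for part (1) but did not carry over. You apply Borel's density theorem to the lattice $F\cap\Gamma$ in $F$ to conclude that $F\cap\Gamma$ is Zariski dense in $F$, and from this that $F_{\QQ}(\RR)^+ = F$. Borel density in its usual form requires $F$ to have no compact simple factors. But $F$ is only assumed to be a closed connected semisimple Lie subgroup of $G(\RR)^+$ satisfying the two conditions of type $\cK$; nothing a priori prevents $F$ from having compact $\RR$-simple factors. (Indeed such $F$ do occur: take a $\QQ$-simple $F_{\QQ}$ with $F_{\QQ}(\RR)$ a product of a non-compact and a compact factor; then $F = F_{\QQ}(\RR)^+$ still has a compact factor, and $L(F)$ can still be ergodic on $(F\cap\Gamma)\backslash F$ because the lattice projects densely to the compact factor.) So the invocation of Borel density is not justified as written. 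Your subsequent paragraph argues that a $\QQ$-simple factor of $F_{\QQ}$ with compact real points would contradict ergodicity, which is correct, but that argument takes place \emph{after} you already have $F_{\QQ}(\RR)^+ = F$, and it addresses $\QQ$-compact $\QQ$-simple factors of $F_{\QQ}$, not the $\RR$-compact factors of $F$ that block Borel density in the first place. The missing step is: use the ergodicity hypothesis \emph{first} to show that the projection of $F\cap\Gamma$ to each compact $\RR$-factor of $F$ is dense (if it were not, its closure would yield a positive-dimensional $L(F)$-invariant quotient of $(F\cap\Gamma)\backslash F$), and then combine this density in the compact part with Borel density on the product of non-compact factors to obtain Zariski density in all of $F$. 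With that correction the rest of your argument goes through.
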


For a subset $E$ of $G(\RR)$, we define the Mumford-Tate group $MT(E)$ of $E$ as
the smallest $\QQ$-subgroup of $G$ whose $\RR$-points contain $E$.
If $F$ is a Lie subgroup of $G(\RR)^+$ of type $\cK$ , then 
by (2) of the above lemma, $MT(F) = F_{\QQ}$ and it is of type $\cH$.

We will make use of the following lemma, which is Lemma 2.4 of \cite{Ul}.

\begin{lem} \label{Almost_simple}
Let $H$ be a $\QQ$-algebraic subgroup of $G$ with $H^0$ almost simple.
Let $L$ be an almost simple factor of $H^0_{\RR}$. Then 
$$
MT(L)=H^0
$$
\end{lem}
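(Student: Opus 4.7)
The plan is to use the fact that a $\QQ$-almost simple group, up to isogeny, is a restriction of scalars of an absolutely simple group, so that the $\RR$-almost simple factors of $H^0_\RR$ are Galois-conjugate.

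First I would set $M = \MT(L)$ and note immediately that $M \subseteq H^0$ (since $H^0$ is a $\QQ$-subgroup containing $L$), so it remains to show the reverse inclusion. Passing to $\oQ$, I would use the structure theorem: since $H^0$ is $\QQ$-almost simple, there is a number field $k$ and an absolutely almost simple group $H'$ over $k$ such that $H^0$ is isogenous to $\Res_{k/\QQ} H'$. Consequently $H^0_{\oQ}$ is an almost direct product of its absolutely simple factors $H_\sigma$ indexed by embeddings $\sigma : k \hookrightarrow \oQ$, and $\Gal(\oQ/\QQ)$ permutes these factors transitively.

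Next I would identify $L$ in this picture. Grouping the $H_\sigma$ according to the real or complex-conjugate pair of places of $k$ they restrict to yields the decomposition of $H^0_\RR$ into $\RR$-almost simple factors. Thus there is a subset $S$ of embeddings (a single real one or a complex-conjugate pair) such that $L_\CC$ is an almost direct product of the $H_\sigma$ with $\sigma \in S$. In particular $M_{\oQ}$ contains at least one of the factors $H_{\sigma_0}$.

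Finally, since $M$ is defined over $\QQ$, the subgroup $M_{\oQ}$ is stable under $\Gal(\oQ/\QQ)$. By transitivity of the Galois action on the $H_\sigma$'s, the Galois translates of $H_{\sigma_0}$ exhaust all of them, so $M_{\oQ}$ contains every $H_\sigma$ and hence (since these generate) the whole almost direct product $H^0_{\oQ}$. Galois descent then gives $M = H^0$. The only mildly subtle point is the passage between $\RR$-almost simple factors and absolutely simple factors grouped by complex conjugation, but once this is laid out the Galois transitivity argument is immediate.
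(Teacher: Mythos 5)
The paper does not give a proof of this lemma: it is quoted verbatim as Lemma~2.4 of \cite{Ul} and used as a black box, so there is no ``paper's own proof'' to compare against. That said, your argument is correct and is the standard one. Up to isogeny a $\QQ$-almost simple group is $\Res_{k/\QQ} H'$ for an absolutely almost simple $H'/k$; over $\oQ$ this decomposes into simple factors $H_\sigma$ permuted transitively by $\Gal(\oQ/\QQ)$; the $\RR$-almost simple factor $L$ corresponds to one real embedding or a complex-conjugate pair, so $\MT(L)_{\oQ}$ contains some $H_{\sigma_0}$; Galois-stability of $\MT(L)_{\oQ}$ plus transitivity then forces it to contain every $H_\sigma$, hence all of $H^0_{\oQ}$, and the reverse inclusion $\MT(L)\subseteq H^0$ is immediate. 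This is almost certainly the argument in \cite{Ul} as well. One could streamline slightly by skipping restriction of scalars: for any $\QQ$-almost simple $H^0$, $\Gal(\oQ/\QQ)$ already acts transitively on the simple factors of $H^0_{\oQ}$ (a proper orbit would generate a proper normal $\QQ$-subgroup), which is all you actually use; but invoking the $\Res_{k/\QQ}$ structure theorem as you do is equally fine and makes the identification of $L$ with a subset of embeddings more transparent.
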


Let $\Omega = \Gamma\backslash G(\RR)^+$.
Note that $\Omega$ carries a natural probability measure, the pushforward of
the Haar measure on $G(\RR)^+$, normalised to be a probability measure (the volume of $\Omega$ is finite).
For each $F$ of type $\cK$, there is a natural probability measure $\mu_F$ attached to $F$.

%Let now $P(\Omega)$ be the set of probability measures on $\Omega$ and let
%$$
%Q(\Omega) = \{ \mu_F , F \in \cK \}.
%$$
%For $\mu \in \Omega$, we let $L(\mu)=L(F)$ be the group generated by one-parameter %unipotent subgroups
%of $F$.  The following theorem is due to Mozes and Shah and is a consequence of Ratner's %theorem.

The following theorem is a consequence of results of Ratner. 
%It is proved in \cite{CU}, Lemme 2.1.
\begin{teo} \label{Ratnerthm}
%The set $Q(\Omega)$ is compact (for the weak convergence topology).

%If $\mu_n$ is a sequence of measures in $Q(\Omega)$ such that $\mu_n$ converges 
%weakly to $\mu$ (hence $\mu\in Q(\Omega)$) there is $n_0 \in \NN$ such that
%$$
%\forall n \geq n_0, Supp(\mu_n) \subset Supp(\mu)
%$$
Let $F=F(\RR)^+$ be a subgroup of $G(\RR)^+$ 
be a semi-simple group without compact factors.

Let $H$ be $\MT(F)$. The closure of $\Gamma \backslash \Gamma F$ in
$\Omega$ is 
$$\Gamma \backslash \Gamma H(\RR)^+ = \Gamma\cap H(\RR)^+ \backslash H(\RR)^+$$
\end{teo}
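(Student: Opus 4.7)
The plan is to apply Ratner's orbit closure theorem to the semisimple group $F$ in order to identify the closure of $\Gamma\backslash \Gamma F$ as a closed homogeneous set $\Gamma\backslash \Gamma L$ for some closed subgroup $L\supset F$, and then to use the minimality property of the Mumford--Tate group $H=\MT(F)$ to show that $L$ coincides with $H(\RR)^+$.

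First I would observe that, because $F$ is semisimple without compact factors, $F$ is generated by its one-parameter unipotent subgroups (for instance by the unipotent radicals of a pair of opposite minimal parabolic subgroups). Ratner's orbit closure theorem therefore applies to the right action of $F$ on $\Omega$: the closure $\overline{\Gamma\backslash \Gamma F}$ is of the form $\Gamma\backslash \Gamma L$, where $L$ is a closed connected Lie subgroup of $G(\RR)^+$ containing $F$, $\Gamma\cap L\backslash L$ is closed in $\Omega$, and the subgroup of $L$ generated by its one-parameter unipotents acts ergodically on $\Gamma\cap L\backslash L$. In the language of Definition~\ref{typeK}, $L$ is of type $\cK$.

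Next I would invoke Lemma~\ref{Equivalence}(2) to produce a $\QQ$-subgroup $L_\QQ$ of $G$ of type $\cH$ with $L=L_\QQ(\RR)^+$. Since $L\supset F$ and $L_\QQ$ is defined over $\QQ$, the very definition of $H=\MT(F)$ as the smallest $\QQ$-subgroup of $G$ whose real points contain $F$ forces $L_\QQ\supset H$, and hence $L\supset H(\RR)^+$. This gives one inclusion
$$
\overline{\Gamma\backslash\Gamma F}\supset \Gamma\backslash \Gamma H(\RR)^+.
$$
For the reverse inclusion, I would appeal to Lemma~\ref{Equivalence}(1): $H$ is of type $\cH$ (the minimality of $\MT(F)$ together with $F\subset H(\RR)$ ensures $H/R_u(H)$ has non-compact real factors), so $H(\RR)^+$ is of type $\cK$. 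In particular $\Gamma\cap H(\RR)^+\backslash H(\RR)^+$ is closed in $\Omega$, and since it contains $\Gamma\backslash\Gamma F$, taking closures gives the opposite inclusion. The identification $\Gamma\backslash\Gamma H(\RR)^+ = \Gamma\cap H(\RR)^+\backslash H(\RR)^+$ is then the standard description of a closed $H(\RR)^+$-orbit.

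The main obstacle is the first step: even though Ratner's theorem is most often stated for a single one-parameter unipotent flow, what is needed here is the strengthening (due to Ratner, Dani--Margulis and Mozes--Shah) that orbit closures of subgroups generated by one-parameter unipotents are themselves homogeneous of the form $\Gamma\backslash \Gamma L$ with $L$ of type $\cK$. Granting this strengthening, all the remaining steps are essentially formal manipulations with the definitions of type $\cH$, type $\cK$, and the Mumford--Tate group, combined with Lemma~\ref{Equivalence}.
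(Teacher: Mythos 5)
Your argument follows essentially the same route as the paper: apply Ratner's orbit closure theorem (in the homogeneous-space form for subgroups generated by unipotent one-parameter subgroups) to identify the closure as $\Gamma\backslash\Gamma L$ with $L$ of type $\cK$, descend to a $\QQ$-group $L_\QQ$ of type $\cH$, compare with $\MT(F)$, and conclude equality. The two-inclusion formulation you use is a harmless unpacking of the paper's terser ``it follows that $H_\QQ = \MT(F)$''.

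There is, however, a genuine gap in the step where you assert that $H = \MT(F)$ is of type $\cH$. Your parenthetical remark only addresses the condition that the semisimple quotient $H/R_u(H)$ have non-compact real $\QQ$-simple factors, but it does not justify the other half of the definition: that the radical of $H$ is \emph{unipotent}. This is not automatic from minimality of $\MT(F)$ --- a priori the Zariski closure over $\QQ$ of a real semisimple subgroup could have a non-trivial central torus in its radical. The paper disposes of this by invoking a lemma of Shah (Lemme~2.2 of \cite{CU}): the radical of the $\QQ$-Zariski closure of a subgroup generated by unipotent one-parameter subgroups is unipotent. Without this (or an equivalent statement) you cannot apply Lemma~\ref{Equivalence}(1) to conclude that $H(\RR)^+$ is of type $\cK$, and the reverse inclusion in your argument does not go through. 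Once you add this citation, your proof is complete and matches the paper's.
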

\begin{proof}
By a result of Cartan(\cite{PlaRa}, Proposition 7.6) the group $F$ is generated by 
its one-parameter unipotent subgroups.

A result of Ratner (see \cite{Rat}, Theorem 3), implies that the closure of 
$\Gamma\backslash\Gamma F$ in $\Omega$ is homogeneous i.e.
there exists  a Lie group $H$ of type $\cK$ such that 
$$
\overline{\Gamma\backslash\Gamma F} = \Gamma\backslash\Gamma H
$$ 
By Lemme 2.1(c) of \cite{CU}, there exists a $\QQ$-algebraic subgroup $H_{\QQ}\subset G$
such that
$$
H(\RR)^+ = H
$$
Since $F \subset H$, we have that $MT(F)\subset H$.
On the other hand, by Lemme 2.2 of \cite{CU} (due to Shah), the radical of $MT(F)$
is unipotent which implies that $MT(F)$ is of type $\cH$.
It follows that $H_{\QQ} = MT(F)$ which finishes the proof.
\end{proof}

\section{Algebraic flows on Shimura varieties.}

\subsection{Reformulation of the hyperbolic Ax-Lindemann theorem.}
Let $(G,X)$ be a Shimura datum. Let $K$ be a compact open subgroup of $G(\AAA_f)$,
$\Gamma=G(\QQ)_+\cap G(\AAA_f)$ and 
$S=\Gamma\backslash X^+$. Let $\pi \colon X^+\rightarrow S$ be the uniformizing map.
Without loss of any generality, in this section we assume that the group $G$ is semisimple of adjoint type.

We first give a reformulation of the hyperbolic Ax-Lindemann conjecture in terms of algebraic flows.
\begin{prop} \label{equivalence}
The hyperbolic Ax-Lindemann conjecture is equivalent to the following statement.
Let $Z$ be any irreducible algebraic subvariety of $X^+$ then the Zariski closure of $\pi(Z)$
is weakly special.
\end{prop}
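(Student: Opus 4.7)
The plan is to prove both implications directly by chasing algebraic and analytic subvarieties through $\pi$. The argument is essentially formal once one grants two facts about weakly special subvarieties: first, that a weakly special subvariety $W$ of $S$ is itself algebraic; and second, that every analytic component of $\pi^{-1}(W)$ is an algebraic subvariety of $X^+$ in the sense introduced after Lemma B1 of \cite{KUY}. Both follow from the explicit description of weakly special subvarieties as images of sets of the form $X_1^+ \times \{y_2\}$ together with the fact that orbit maps in a realisation are semi-algebraic (Lemma 2.1 of \cite{Ul}).

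For the implication from the classical hyperbolic Ax-Lindemann theorem to version 2, let $Z \subset X^+$ be an irreducible algebraic subvariety and set $W := \overline{\pi(Z)}^{Zar} \subset S$. Then $Z \subset \pi^{-1}(W)$, so $Z$ can be enlarged to a maximal algebraic subvariety $Y$ of $\pi^{-1}(W)$ containing $Z$. Applying the classical Ax-Lindemann theorem with $W$ in the role of ``$Z$'' and with this $Y$, the image $\pi(Y)$ is weakly special, and in particular algebraic in $S$. The inclusions $\pi(Z) \subset \pi(Y) \subset W$ combined with the minimality of $W$ as the Zariski closure of $\pi(Z)$ force $\pi(Y) = W$, so $W$ is weakly special, which is the content of version 2.

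For the converse, assume version 2 and consider an algebraic subvariety $Z \subset S$ together with a maximal, hence irreducible, algebraic subvariety $Y$ of $\pi^{-1}(Z)$. Applying version 2 to $Y$ yields $W := \overline{\pi(Y)}^{Zar}$ weakly special, and since $\pi(Y) \subset Z$ we have $W \subset Z$. Let $Y'$ be the analytic component of $\pi^{-1}(W)$ containing $Y$. By the second fact above, $Y'$ is an algebraic subvariety of $X^+$, and $Y' \subset \pi^{-1}(W) \subset \pi^{-1}(Z)$. The maximality of $Y$ forces $Y = Y'$, and since $\pi$ sends each analytic component of $\pi^{-1}(W)$ onto $W$, we conclude $\pi(Y) = \pi(Y') = W$, which is weakly special.

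The only non-formal point is the verification of the second fact, namely that the analytic components of $\pi^{-1}(W)$ for a weakly special $W \subset S$ are algebraic subvarieties of $X^+$. This is the main, mild, obstacle: one writes $W$ via a sub-Shimura datum $(M,X_M)$ with splitting $(M^{ad},X_M^{ad}) = (M_1,X_1) \times (M_2,X_2)$, so that the analytic components of $\pi^{-1}(W)$ are $\Gamma$-translates of $X_1^+ \times \{y_2\}$; each such orbit is closed analytic and semi-algebraic by the definition of a realisation combined with Lemma 2.1 of \cite{Ul}, and hence is algebraic in the sense used throughout the paper.
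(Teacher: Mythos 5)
Your proof is correct and follows essentially the same argument as the paper: in one direction you enlarge $Z$ to a maximal algebraic subvariety of the preimage of $W=\overline{\pi(Z)}^{Zar}$ and invoke Ax-Lindemann, and in the other you compare the maximal $Y$ with an analytic component of the preimage of the weakly special $W=\overline{\pi(Y)}^{Zar}$. The extra paragraph justifying that analytic components of $\pi^{-1}(W)$ are algebraic for $W$ weakly special is a welcome, correct elaboration of a step the paper takes for granted.
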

\begin{proof}
Let us assume that the hyperbolic Ax-Lindemann conjecture holds true. Let $A$ be an irreducible
algebraic subvariety of $X^+$ and $V$ be the Zariski closure of $\pi(A)$. Let $A'$ be a maximal irreducible
algebraic subvariety of $\pi^{-1}(V)$ containing $A$. By the hyperbolic Ax-Lindemann conjecture
$\pi(A')$ is a weakly special subvariety of $V$. As $A\subset \pi(A')\subset V$ and as $\pi(A')$ is  irreducible algebraic
we have $V=\pi(A')$. Therefore $V$ is weakly special.

Let us assume that the statement of the proposition holds true. 
Let $V$ be an irreducible algebraic subvariety of $S$. Let $Y$ be a maximal irreducible
algebraic subvariety of $\pi^{-1}(V)$. Then the Zariski closure $V'$ of $\pi(Y)$ is weakly special.
Moreover $V'\subset V$. Let $W$ be an analytic component of $\pi^{-1}(V')$  containing
$Y$. As $V'$ is weakly special, $W$ is irreducible algebraic. By maximality of $Y$ we have 
$Y=W$. Therefore $\pi(Y)=V'$ is weakly special.

\end{proof}

\subsection{Application of Ratner's theory.}

Let $(G,X)$ be a Shimura datum and $X^+$ a connected component of $X$.
We assume that $G$ is semi-simple of adjoint type,
which we do.

We now consider a totally geodesic (weakly special) subvariety $Z$ of $X^+$.
Recall that there exists a semi-simple  subgroup $F(\RR)^+$ of $G$
without almost simple compact factors and a point $x$ such that
$x$ factors through $F Z_G(F)^0$.

Let $\alpha$ be the natural map $G(\RR)^+ \lto \Gamma \backslash G(\RR)^+$
and $\pi_x$ be the map $\Gamma \backslash G(\RR)^+ \lto \Gamma \backslash X^+$
sending $x$ to $g x$.
Recall that $\pi \colon X^+ \lto \Gamma \backslash X^+$ is the uniformisation map.
We have
$$
\overline{\pi(Z)} = \overline{\pi_x \circ \alpha (F(\RR)^+)}
$$

We let $H$ be the Mumford-Tate group of $F(\RR)^+$. Recall that it is defined to be the smallest connected 
subgroup of $G$ (hence defined over $\QQ$) whose extension to $\RR$ contains $F(\RR)^+$.

%By \cite{CU}, lemma 2.3, the group $H$ is of type $\cH$.

By \cite{PlaRa},  Prop 7.6, the group $F(\RR)^+$ is generated by its one-parameter 
unipotent subgroups.

By Theorem \ref{Ratnerthm}, we conclude the following:

\begin{prop} \label{Ratner}
The closure of $\alpha(F(\RR)^+)$ in $\Gamma \backslash G(\RR)^+$
is $\Gamma \cap H(\RR)^+ \backslash H(\RR)^+$.
\end{prop}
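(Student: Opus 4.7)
The statement is essentially a direct translation of Theorem \ref{Ratnerthm} into the notation of this section, so my plan is to verify that the hypotheses are in place and then invoke that theorem. First, I would observe that $\alpha(F(\RR)^+)$ is exactly the image of $F(\RR)^+$ under the quotient map $G(\RR)^+ \to \Gamma\backslash G(\RR)^+$, which coincides with $\Gamma\backslash \Gamma F(\RR)^+$ inside $\Omega = \Gamma\backslash G(\RR)^+$. Thus the closure we are asked to compute is exactly the object whose description is supplied by Theorem \ref{Ratnerthm}, provided its hypotheses apply.

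Next I would verify those hypotheses for the group $F(\RR)^+$. It is assumed to be semi-simple without almost simple compact factors. Under these assumptions, the result of Cartan cited just above the statement (\cite{PlaRa}, Proposition 7.6) guarantees that $F(\RR)^+$ is generated by its one-parameter unipotent subgroups, which is precisely the input needed to feed $F(\RR)^+$ into Theorem \ref{Ratnerthm}. The theorem then produces a $\QQ$-algebraic subgroup, call it provisionally $H'$, of type $\cH$ such that $F(\RR)^+\subset H'(\RR)^+$ and the closure of $\Gamma\backslash\Gamma F(\RR)^+$ equals $\Gamma\cap H'(\RR)^+\backslash H'(\RR)^+$; in its proof, $H'$ is identified with $\MT(F(\RR)^+)$.

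Finally I would identify $H' = \MT(F(\RR)^+) = H$ with the group named in the proposition. This identification is immediate from the definition of the Mumford-Tate group of a subset recalled just before Lemma \ref{Almost_simple} together with the remark that $\MT$ of a type $\cK$ subgroup is of type $\cH$. Rewriting the conclusion gives
$$
\overline{\alpha(F(\RR)^+)} = \Gamma\cap H(\RR)^+\backslash H(\RR)^+,
$$
which is the claim. Since every ingredient has been explicitly set up in the preceding subsections, I do not anticipate any genuine obstacle here; the only point that requires a moment's care is checking that the $H'$ produced by Ratner's theorem really is the Mumford-Tate group as defined in this section, but that is handled inside the proof of Theorem \ref{Ratnerthm} itself.
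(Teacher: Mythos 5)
Your proposal matches the paper exactly: the paper states this proposition as an immediate consequence of Theorem \ref{Ratnerthm}, after noting (via \cite{PlaRa}, Prop.\ 7.6) that $F(\RR)^+$ is generated by one-parameter unipotent subgroups, and identifying $H=\MT(F(\RR)^+)$. Nothing is missing and no genuinely different route is taken.
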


\subsection{Closure in $S$.}

From the fact that the map $\pi_x$ is proper and Proposition \ref{Ratner}, we immediately deduce the following 

\begin{teo} \label{main_th}
The closure of $\pi(Z)$ in $S$ is  $V$, the image of $H(\RR)^+ \cdot x$
i.e. it is a real weakly special subvariety.
\end{teo}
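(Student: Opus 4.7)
The plan is to combine Proposition \ref{Ratner} with the properness of the map $\pi_x\colon \Gamma\backslash G(\RR)^+\lto S$, which reduces the statement to a clean topological manipulation. First I would rewrite $\pi(Z)$ in the form given in the paper:
$$
\pi(Z) \;=\; \pi_x\!\circ\!\alpha\bigl(F(\RR)^+\bigr),
$$
so that we may study its closure inside the homogeneous space $\Gamma\backslash G(\RR)^+$ before pushing it down to $S$ via $\pi_x$.

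Next I would verify that $\pi_x$ is proper. Since $x\in X^+$ and $X^+$ is the symmetric space of maximal compact subgroups of $G(\RR)^+$, the stabiliser $K_x$ is compact, and the fibres of the map $G(\RR)^+\lto X^+$, $g\mapsto g\cdot x$, are cosets of $K_x$; the same then holds after quotienting by $\Gamma$ on the left, making $\pi_x$ a proper continuous map between locally compact Hausdorff spaces. In particular $\pi_x$ is closed, and hence commutes with taking topological closure:
$$
\overline{\pi_x(A)} \;=\; \pi_x\bigl(\overline{A}\bigr)
$$
for any subset $A\subset\Gamma\backslash G(\RR)^+$.

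I would then apply this with $A=\alpha(F(\RR)^+)$ and invoke Proposition \ref{Ratner}, which identifies $\overline{\alpha(F(\RR)^+)}$ with $\Gamma\cap H(\RR)^+\backslash H(\RR)^+$. This yields
$$
\overline{\pi(Z)} \;=\; \pi_x\!\left(\,\Gamma\cap H(\RR)^+\backslash H(\RR)^+\,\right),
$$
and this last set is by definition the image $V$ of the orbit $H(\RR)^+\cdot x$ in $S$. Since $H=\MT(F(\RR)^+)$ has unipotent radical (by Shah's lemma as recalled in the proof of Theorem \ref{Ratnerthm}) and its reductive quotient is an almost direct product of $\QQ$-simple factors with non-compact real points, $H$ is of type $\cH$; so $V$ is indeed a real weakly special subvariety in the sense of the definition of the previous section.

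The substantive content has already been absorbed into Proposition \ref{Ratner}; the only genuine point to check here is the properness of $\pi_x$, which is straightforward from the compactness of $K_x$. So I do not expect any real obstacle at this stage, and the theorem follows as an immediate corollary of Ratner's orbit closure theorem together with this standard properness statement.
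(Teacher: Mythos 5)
Your proof is correct and follows essentially the same route as the paper, which simply invokes the properness of $\pi_x$ together with Proposition \ref{Ratner}; you are filling in the two details left implicit there, namely why $\pi_x$ is proper (it is a fibration with compact fibre $K_x$) and why $H=\MT(F(\RR)^+)$ is of type $\cH$ (Shah's lemma, already established in the proof of Theorem \ref{Ratnerthm}).
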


In this section we examine cases where we can actually make a stronger conclusion, namely:
\begin{enumerate}
\item The variety $V$ from Theorem \ref{main_th}
is locally symmetric and hence real totally geodesic.
\item It has a Hermitian structure i.e. is a weakly special subvariety.
\end{enumerate}

\begin{teo} \label{main_th2}
Assume $Z_G(F)$ is compact. Then $V$ is a locally symmetric variety.
\end{teo}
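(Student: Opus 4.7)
The starting point is Theorem~\ref{main_th}, which identifies $V$ as the image in $S$ of $H(\RR)^+\cdot x$ for $H=\MT(F(\RR)^+)$. By Shah's lemma (Lemme~2.2 of \cite{CU}), $H$ is a $\QQ$-group of type $\cH$, so it has a Levi decomposition $H=L\ltimes R_u(H)$ with $R_u(H)$ unipotent; since $F$ is semisimple, $F\cap R_u(H)=1$ and we may choose $L$ to contain $F$. By the criterion stated in Section~2 following the definition of real weakly special subvariety, $V$ will be a locally symmetric (hence real totally geodesic) subvariety of $S$ as soon as we show that $K_x\cap L(\RR)^+$ is a maximal compact subgroup of $L(\RR)^+$ for such a Levi $L$.

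The hypothesis that $Z_G(F)$ is compact enters in two independent ways. On the one hand, $Z_{R_u(H)}(F)\subseteq R_u(H)\cap Z_G(F)$ is simultaneously unipotent and compact, hence trivial; thus $F$ acts on the nilpotent Lie algebra $\Lie(R_u(H))$ without any non-zero fixed vector. On the other hand, $Z_L(F)\subseteq Z_G(F)$ is compact, so inside the reductive group $L$ the centraliser of the semisimple subgroup $F$ is compact. Combined with the factorisation $x(\sqrt{-1})=fz$ with $f\in F(\RR)\subseteq L(\RR)$ and $z\in Z_G(F)^0(\RR)$, available because $x$ factors through $FZ_G(F)^0$, these two facts pin down the geometry of the Cartan involution relative to $L$: by the proof of Proposition~\ref{factoring}, $\mathrm{Ad}(f)$ induces the Cartan involution of $F$ attached to the maximal compact subgroup $K_x\cap F(\RR)^+$, while $z$ lies in the compact group $Z_G(F)^0(\RR)$ centralising~$F$.

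The main step, and where I expect the technical obstacle to lie, is to verify that $\theta:=\mathrm{Ad}(x(\sqrt{-1}))$ stabilises $L$ and restricts to a Cartan involution of $L(\RR)$. The cleanest route is to choose $L$ to be a $\theta$-stable Levi subgroup of $H_{\RR}$: such a Levi exists by the standard existence theorem for $\theta$-stable Levi decompositions compatible with a $\theta$-stable reductive subgroup (here $F$, which is $\theta$-stable because $f\in F$ and $z$ centralises $F$), and it can be arranged to contain $F$. Once $L$ is $\theta$-stable, $L(\RR)^+\cap K_x$ is exactly the fixed subgroup of $\theta|_L$ in $L(\RR)^+$, which is compact because it is contained in $K_x$; the fact that $\theta|_L$ is then a Cartan involution of $L$ follows because (a) $\theta|_F$ is the Cartan involution of $F$, and (b) the compactness of $Z_L(F)$ controls the ``extra'' directions of $L$ outside $F$, so a dimension count shows that $\Lie(L)^{\theta}$ has the dimension of a maximal compact subalgebra. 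Combined with the first consequence of the hypothesis, this yields that $V=\Gamma\cap H(\RR)^+\backslash H(\RR)^+/(K_x\cap H(\RR)^+)$ is a real locally symmetric subvariety of $S$, as desired.
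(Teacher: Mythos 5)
Your approach diverges genuinely from the paper's, and unfortunately it has real gaps exactly where you flagged the expected difficulty. The paper's proof does not attempt to build a Cartan involution of a Levi of $H$ by hand; instead it reduces immediately to the statement that $H(\RR)^+\cap K_x$ is a maximal compact subgroup of $H(\RR)^+$, records that $Z_G(F)\subset K_x$, fixes the Cartan decompositions $G(\RR)^+=P_xK_x$ and $F=(P_x\cap F)(K_x\cap F)$, and then invokes Proposition~3.10 of \cite{Ul} to produce the compatible Cartan decomposition $H(\RR)=(P_x\cap H(\RR))(K_x\cap H(\RR))$. All the Lie-theoretic work is delegated to that cited proposition, so the paper's argument is short and precise (modulo the reference).

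Your alternative route has two concrete problems. First, the existence of a $\theta$-stable Levi of $H_{\RR}$ containing $F$, for $\theta=\mathrm{Ad}(x(\sqrt{-1}))$, presupposes that $\theta$ stabilises $H_{\RR}$; but $x(\sqrt{-1})=fz$ with $f\in F(\RR)$ and $z\in Z_G(F)^0(\RR)$, and while $\mathrm{Ad}(f)$ preserves $H=\MT(F)$, there is no reason for $\mathrm{Ad}(z)$ to do so: $z$ need not normalise the $\QQ$-group $\MT(F)$, and $z\,H\,z^{-1}$ is a priori only a real subgroup with $F\subset (zHz^{-1})_{\RR}$, which does not force $zHz^{-1}=H$ by minimality. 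So the starting point of your ``main step'' is not established. Second, and more fundamentally, the final claim that $\theta|_L$ is a Cartan involution is asserted via a ``dimension count'' you never carry out. Compactness of the fixed-point subgroup of an involution of a reductive group does not by itself imply the involution is Cartan (e.g.\ a nontrivial involution of a compact group has compact fixed locus but is not the Cartan involution, which is the identity). One needs an argument that $\Lie(L)^{\theta}$ is a \emph{maximal} compactly embedded subalgebra — precisely the content that Proposition~3.10 of \cite{Ul} supplies and that your proposal treats as an exercise. You also do not address the fact that the paper's proof treats $H$ as semisimple (and thus applies the cited proposition to $H$ directly rather than to a Levi), an assertion that itself requires justification but which your Levi-based variant does not resolve either. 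Your observations that $Z_{R_u(H)}(F)$ is trivial and $Z_L(F)$ is compact are correct and potentially useful, but as written they do not close the argument.
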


\begin{proof}
It is enough to show that $H(\RR)^+ \cap K_x$ is a maximal compact subgroup of $H(\RR)^+$.

Notice that since $Z_G(F)$ fixes $x$, we have 
$$
Z_G(F)\subset K_x
$$

We follow Section 3.2 of \cite{Ul}. 

Since $K_x$ is a maximal compact subgroup of $G(\RR)^+$ such that
$F(\RR)^+\cap K_x$ is a maximal compact subgroup of $F(\RR)^+$,
we have two Cartan decompositions:

$$
G(\RR)^+ = P_x K_x \text{ and } F = (P_x \cap F) \cdot (K_x \cap F)
$$

for a suitable parabolic subgroup $P_x$ of $G(\RR)^+$.

We now apply Proposition 3.10 of \cite{Ul} in out situation.
We have a connected semi-simple group $H$  such that $F \subset H_{\RR}$.
According to Proposition 3.10 of \cite{Ul}, there exists a Cartan decomposition
$$
H(\RR) = (P_x \cap H(\RR)) \cdot (K_x \cap H(\RR))
$$

This, in particular implies that $K_x \cap H(\RR)$ is a maximal compact subgroup of $H(\RR)^+$
as required.
\end{proof}

\begin{teo}\label{main_th3}
Assume that $Z_G(F)$ is trivial. Then $V$ is a weakly special subvariety.
\end{teo}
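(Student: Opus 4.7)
The plan is to combine Theorem \ref{main_th2} (which applies, since $Z_G(F)=\{1\}$ is in particular stronger than $Z_G(F)$ being compact) with a converse application of Proposition \ref{factoring} to the Mumford--Tate group $H$ of $F(\RR)^+$. The new input from the stronger hypothesis is that $x$ now factors through $F\subset H$, so the orbit $H(\RR)^+\cdot x$ will carry an $\mathrm{Ad}(x(\sqrt{-1}))$-stable tangent space at $x$, and hence a complex structure.

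First I would observe that the proof of Theorem \ref{main_th2} already provides the Cartan decomposition $H(\RR)=(P_x\cap H(\RR))\cdot(K_x\cap H(\RR))$ with $K_x\cap H(\RR)^+$ maximal compact in $H(\RR)^+$. The existence of such a decomposition forces $\Lie(H_\RR)$, and therefore the $\QQ$-algebraic group $H$, to be reductive. Combining this with Shah's lemma (invoked in the proof of Theorem \ref{Ratnerthm}), which asserts $R(H)=R_u(H)$, one concludes $R_u(H)=\{1\}$, so that $H$ is semi-simple; that $H$ has no compact $\QQ$-simple factors is built into the definition of type $\cH$.

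Next I would check that $H(\RR)^+\cdot x$ is a complex submanifold of $X^+$. By Proposition \ref{factoring} applied to the original data $(F,x)$, the morphism $x\colon\SSS\to G_\RR$ factors through $F\cdot Z_G(F)^0$, which under our hypothesis collapses to $F$; in particular $x(\sqrt{-1})\in H(\RR)^+$. Since the complex structure on $T_xX^+\simeq \Lie(G_\RR)/\Lie(K_x)$ is given by $\mathrm{Ad}(x(\sqrt{-1}))$, and this operator preserves both $\Lie(H_\RR)$ and $\Lie(K_x\cap H(\RR)^+)$, the tangent subspace $T_x(H(\RR)^+\cdot x)=\Lie(H_\RR)/\Lie(K_x\cap H(\RR)^+)$ is stable under the complex structure. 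By $H(\RR)^+$-homogeneity this propagates to a complex submanifold structure on the whole orbit.

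Finally I would run Proposition \ref{factoring} backwards with $H$ playing the role of $F$: $H$ is semi-simple without compact factors by the first step, $Z_G(H)\subset Z_G(F)=\{1\}$ so that $H\cdot Z_G(H)^0 = H$, and $x$ factors through $H$ as already noted. Proposition \ref{factoring} then identifies $H(\RR)^+\cdot x$ as a complex totally geodesic, i.e.\ weakly special, subvariety of $X^+$, and its image $V$ in $S$ is therefore weakly special in $S$. The only non-formal step in this chain is the semi-simplicity of $H$; that is where Theorem \ref{main_th2} (giving reductivity via the Cartan decomposition) and Shah's lemma (giving unipotent radical) must be combined, and I expect this to be the main obstacle -- the rest of the argument is a formal unwinding of the hypothesis $Z_G(F)=\{1\}$.
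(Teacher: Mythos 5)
The paper's own proof of Theorem \ref{main_th3} is essentially a one-line appeal to Lemma 3.3 of \cite{Ul}: since $Z_G(F)=\{1\}$, the morphism $x$ factors through $F$ and hence through $H_\RR$, so $(H,X_H)$ with $X_H=H(\RR)\cdot x$ is a Shimura subdatum, and $V$ is weakly special. You instead try to re-derive the weak specialness of $X_H$ internally via the converse direction of Proposition \ref{factoring}, after first establishing semisimplicity of $H$. This is a genuinely different route, but as written it has two gaps.

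First, the step ``the Cartan decomposition $H(\RR)=(P_x\cap H(\RR))\cdot(K_x\cap H(\RR))$ forces $H$ reductive'' is circular. That decomposition is obtained in the proof of Theorem \ref{main_th2} by invoking Proposition 3.10 of \cite{Ul}, and the paper explicitly introduces that step with the phrase ``We have a connected semi-simple group $H$''. So the decomposition you want to use already presupposes the conclusion you are trying to reach. The clean, non-circular way to get reductivity in the present setting is: since $Z_G(F)=\{1\}$, Proposition \ref{factoring} gives that $x$ factors through $F\subset H_\RR$, so $x(\sqrt{-1})\in H(\RR)$; hence $H_\RR$ is stable under the Cartan involution $\theta_x=\mathrm{Inn}(x(\sqrt{-1}))$, which forces the restriction of the Killing form of $\Fg$ to $\Lie(H_\RR)$ to be non-degenerate and therefore $H$ to be reductive. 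Only then does Shah's lemma ($R(H)=R_u(H)$) upgrade this to semisimplicity. You do eventually reach the right conclusion, but the justification you offer does not independently establish it.

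Second, when you apply Proposition \ref{factoring} backwards with $H$ in place of $F$, you invoke ``no compact factors'' by pointing at the definition of type $\cH$. But type $\cH$ only rules out compact $\QQ$-simple factors, whereas Proposition \ref{factoring} requires $F_\RR$ to have no compact $\RR$-simple factors. A $\QQ$-simple group with non-compact real points can still have compact $\RR$-simple factors after base change. This is repairable — one decomposes $H_\RR=H^{nc}\cdot H^{c}$, notes that $\theta_x$ restricts to the identity on the compact part so that $H^{c}(\RR)\subset K_x$ and $H(\RR)^+\cdot x=H^{nc}(\RR)^+\cdot x$, and then applies Proposition \ref{factoring} to $H^{nc}$ — but you do not address it. With these two repairs, your argument becomes a valid self-contained alternative to the paper's citation of Lemma 3.3 of \cite{Ul}; as it stands it is not yet complete.
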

\begin{proof}
In this case, $x$ factors through $F$ and therefore through $H_{\RR}$.
Let $X_H$ be the $H(\RR)$-orbit of $x$.
By lemma 3.3 of \cite{Ul}, $(H,X_H)$ is a Shimura subdatum of $(G,X)$ and therefore
$V$ is a weakly special subvariety. 
\end{proof}

\begin{exe}
We give examples where $Z_G(F)$ is neither trivial nor compact, but the
closure of $\pi(Z)$ is nevertheless hermitian.

Let $G$ be an almost simple group over $\QQ$. 
A typical example is $G = Res_{K/\QQ} SL_{2,K}$ where $K$ is a totally real field
of degree $n$. 
Let $F$ be an $\RR$-simple factor of $G_{\RR}$. In the above case $F$ could be
for example $SL_{2}(\RR)$ embedded as $SL_{2}(\RR) \times \{1\} \times \cdots \times \{1\}$.
Then the centraliser of $F$ is not compact. However, by Lemma 2.4 of \cite{Ul}, 
the Mumford-Tate group of $F$ is $G$ and for any point $x$ of $X^+$, the image of
$F\cdot x$ in $S$ is $G$.
\end{exe}

\begin{exe}[Products of two modular curves]
Consider $G = \SL_2 \times \SL_2$, $X^+ = \HH \times \HH$ and
$$Z = \{(\tau, g \tau), \tau \in \HH \}.$$
Let $\Gamma = \SL_2(\ZZ) \times \SL_2(\ZZ)$ and
$\pi \colon \HH \times \HH \lto \Gamma \backslash X^+$.

Then, if  $g \in G(\QQ)$, then the closure of $\pi(Z)$ is a special
subvariety. It is the modular curve $Y_0(n)$ for some $n$.

If $g \notin G(\QQ)$, then $\pi(Z)$ is dense in $\Gamma \backslash X^+$. 
In this case the group $F(\RR)^+$ is $(h, ghg^{-1}) \subset \SL_2(\RR)\times \SL_2(\RR)$.
\end{exe}

\begin{exe}[Rank one groups]
Here is another quite general example where 
$Z_G(F)$ is trivial and hence the closure of the image of $F(\RR)^+\cdot x$ 
is a weakly special subvariety.

Suppose that the groups $G$ is $U(n,1)$. In this case $X^+$ is
an open ball in $\CC^n$. The real rank of $G$ is one.
Let $F$ be the subgroup $U(m,1)$ of $U(n,1)$ (with $m \leq n$).
Then the centraliser $Z_G(F)$ is trivial.
Indeed, as the split torus is already contained in $F$,
the centraliser must be compact.
\end{exe}

%Another example is as follows:

%\begin{exe} [Hilbert modular surfaces.]
%Consider $G = \Res_{F/\QQ} \GL_{2,F}$ where $F$ is a real quadratic field.
%Then $X = \HH \times \HH$ and we take $\Gamma = SL_2(O_F)$ where $O_F$ is
%the ring of integers of $F$.

%Fix $\tau_0 \in \HH$.

%Let $Z = \{ (\tau, \tau_0), \tau \in \HH \} $.
%Notice that $Z$ is $F \cdot (i, \tau_0)$ where 
%$$
%F = \{ (g , 1), g \in \SL_2(\RR) \} \subset \SL_2(\RR) \times \SL_2(\RR)
%$$
%Then $F$ is a group of type $\cK$ and $MT(F) = G$.

%It follows that $\pi(Z)$ is dense in $\Gamma \backslash X$.
%\end{exe}

%\end{proof}

\vspace{0.5cm}
\small{

\noindent
Andrei Yafaev\\ University College London, Department of Mathematics.\\
Gower street, WC1E 6BT, London, UK

\noindent
email : \texttt{yafaev@math.ucl.ac.uk}}

\vspace{0.5cm}
\small{

\noindent
Emmanuel Ullmo\\ Universit\'e de Paris-Sud and IHES\\
D\'epartement de Math\'ematiques, Universit\'e Paris-Sud, B\^at. 425, F-91405 Orsay Cedex ,France

\noindent
email : \texttt{Emmanuel.Ullmo@math.u-psud.fr }}


\begin{thebibliography}{99}

%\bibitem{An} Y. Andr\'e
%{\it Mumford-Tate groups of mixed Hodge structures and the theorem of the fixed part.}  Compositio Math.  {\bf 82 } (1992),  no. 1, 1--24.

%\bibitem{BH} M. Bridson, A. Haefliger {\it Metric spaces of non-positive curvature.} Springer, 1999

\bibitem{Bo} A. Borel {\it Introduction aux groupes arithm\'etiques. }
Hermann, Paris (1969).


\bibitem{CU} L. Clozel, E. Ullmo {\it Equidistribution de sous-vari\'et\'es sp\'eciales.}  Annals of Maths., 161 (2005),1571-1588.

\bibitem{CU1}L.Clozel, E. Ullmo, {\it Equidistribution de mesures algébriques.} Compositio  Mathematica 141 (2005), 1255-1309


\bibitem{CU2} L. Clozel, E.Ullmo
{\it Equidistribution ad\`elique des tores et \'equidistribution des points CM.}  Doc. Math. 2006, Extra Vol., 233-260.


\bibitem{De} P. Deligne
{\it Vari\'et\'es de Shimura: interpr\'etation modulaire et
techniques de construction de mod\`eles canoniques},  dans {\it
Automorphic Forms, Representations, and $L$-functions} part. {\bf
2}; Editeurs: A. Borel et W Casselman; Proc. of Symp. in Pure
Math.  {\bf 33}, American Mathematical Society, (1979),  p.
247--290.

%\bibitem{De0}  P. Deligne  {\it La conjecture de Weil pour les surfaces K3}, Invent. Math.
%{\bf 15}  (1972), 206--226.


%\bibitem{VDD} L. Van den Dries {\it Tame Topology and o-minimal structures.} LMS lecture note series, 248.
%Cambridge University Press, 1998.


%\bibitem{VdM} L. Van den Dries, C. Miller {\it On the real exponential field with restricted analytic functions},
%Israel J. Math. {\bf 85} (1994), 19--56.
 
 
 
%\bibitem{EY} B. Edixhoven, A. Yafaev {\it Subvarieties of Shimura varieties.} Annals of %Mathematics (2) 157, 2003, 621-645 

\bibitem{EMS} A. Eskin, S. Mozes, N. Shah, {\it Non divergence of translates of certain algebraic measures. }
GAFA vol {\bf 7} (1997) 48--80.

%\bibitem{HT} J-M. Hwang, W-K. To {\it Volumes of complex analytic subvarieties of Hermitian %symmetric spaces.} American Journal of Mathematics, Vol. 124, Number 6, December 2002, p. %1221-1226.


%\bibitem{Gh} E. Ghys, P. de la Harpe {\it Sur les groupes hyperboliques d'apr\`es Mikhael Gromov}
%Progress in Maths. {\bf 83}, Birkh\"auser (1990).

%\bibitem{Kl} H. Klingen 
%{\it Introductory lectures on Siegel modular forms}. Cambridge Studies in Advanced Mathematics, {\bf 20}. Cambridge University Press, Cambridge, (1990).

\bibitem{KUY} B. Klingler,E. Ullmo,  A. Yafaev {\it The Hyperbolic Ax-Lindemann-Weierstrass conjecture.} Publications Math\'ematiques de l'IHES, to appear.

\bibitem{KY} B. Klingler, A. Yafaev {\it The Andr\'e-Oort conjecture.} 
Annals of Mathematics (2), 180, 2014, n.3, 867-825. 

%\bibitem{La} R. P. Langlands {\it The Dimension of the Spaces of Automorphic Forms}
%Amer. J. Math. {\bf 85} (1963), 99-125

\bibitem{Mo} N. Mok,  {\it Metric Rigidity Theorems on Hermitian Locally Symmetric Manifolds}
Series in Pure Math. {\bf 6}. World Scientific  (1989).

\bibitem{MoMo} B. Moonen {\it Linearity properties of Shimura varieties. I.} J. Algebraic Geom. 7 (1998), 539-567.

%\bibitem{Mo2} B. Moonen 
%{\it Models of Shimura varieties in mixed characteristics. Galois representations in arithmetic %algebraic geometry} (Durham, 1996),  London Math. Soc. Lecture Note Ser., {\bf 254}, %Cambridge Univ. Press, Cambridge, (1998).

%\bibitem{PS} Y. Peterzil, S. Starchenko {\it Definability of restricted theta functions and families %of abelian varieties.} 
%preprint (2011).

%\bibitem{PetStar} Y. Peterzil, S. Starchenko {\it Tame complex analysis and o-minimality.}
%Proceedings of the ICM, Hyderabad, 2010. Available on first author's web-page.

%\bibitem{Pil} J. Pila,
%{\it O-minimality and the Andre-Oort conjecture for $\CC^n$}, Annals Math. {\bf 173} (2011), %1779-1840. 

\bibitem{PlaRa} V. Platonov, A. Rapinchuk {\it Algebraic groups and number theory.}
Academic Press, Pure and Applied Mathematics, 139 

%\bibitem{PW} J. Pila, A. Wilkie {\it The rational points on a definable set.} Duke Math. Journal, %133(3), (2006) 591-616.

%\bibitem{PT} J. Pila, J. Tsimerman {\it The Andr\'e-Oort conjecture for the moduli space of %abelian surfaces.}
%Preprint, 2011, available on ArXiv.

\bibitem{PZ} J. Pila, U. Zannier 
{\it Rational points in periodic analytic sets and the Manin-Mumford conjecture}. Atti Accad. Naz. Lincei Cl. Sci. Fis. Mat. Natur. Rend. 
Lincei (9) Mat. Appl. {\bf 19} (2008), no. 2, 149-162.


\bibitem{Rat} M. Ratner {\it Interaction between Ergodic theory, Lie groups and Number theory.}
Proceedings of ICM 1994 (Zurich), Vol 1, Birkhouser (1995), p. 157-182.


%\bibitem{RU} N. Ratazzi, E. Ullmo {\it Galois+Equidistribution = Manin-Mumford.}
%Summer school Arithmetic geometry, 419-430, Clay Math. Proc., 8, Amer. Math. Soc., %Providence, RI, 2009.


%\bibitem{Pi} R. Pink 
%{\it Arithmetical compactification of mixed Shimura varieties}
%Dissertation (1989), Bonner Mathematische Schriften 209.
%available at authors homepage http://www.math.ethz.ch/~pink/publications.html

\bibitem{PlaRa} V. Platonov, A. Rapinchuk {\it Algebraic groups and number theory.} Academic Press, 1994

%\bibitem{S} T. Scanlon {\it Counting special points: logic, diophantine geometry and transcendence theory.}
%Preprint 2011.

%\bibitem{Ts} J. Tsimerman  {\it Brauer-Siegel for arthmetic tori and lower bounds for Galois %orbits of special points.} 
%Preprint 2011.

\bibitem{UY_AV} E. Ullmo, A. Yafaev {\it Algebraic flows on abelian varieties.} 
to appear in J. reine und angewandte Mathematik.

\bibitem{Ul} E. Ullmo {\it Equidistribution des sous-vari\'et\'es sp\'eciales II.} 
J. reine angew. Math.  (2006), 1-24.
 
\bibitem{Ul2} E. Ullmo {\it Applications du theor\`eme d'Ax-Lindemann hyperbolique.} Compositio Mathematica, 150, n.2, 2014, 175-190.

%\bibitem{UY0} E. Ullmo, A. Yafaev {\it  Galois orbits and equidistribution of special subvarieties: towards the Andr\'e-Oort %conjecture.} Preprint. 
%2012. 


\bibitem{UY} E. Ullmo, A. Yafaev {\it A characterisation of special subvarieties.} 
Mathematika {\bf 57} (2011), no. 2, 263-273.

%\bibitem{UY2} E. Ullmo, A. Yafaev {\it Nombre de classes des tores de multiplication
%complexe et bornes inf\'erieures  pour les orbites
%Galoisiennes de points sp\'eciaux} Preprint 2011.

\bibitem{UY1} E. Ullmo, A. Yafaev {\it The hyperbolic Ax-Lindemann conjecture in the co-compact case.}  Duke Math Journal, 163, 2014, n.2, 433-463.

\end{thebibliography}
\end{document}